\documentclass[12pt,leqno]{article}
\usepackage{hyperref}
\usepackage{soul}
\usepackage{appendix}
\usepackage{pifont}
\usepackage{rotating}
\usepackage[doc]{optional}
\usepackage{xcolor}
\definecolor{labelkey}{rgb}{0,0.08,0.45}
\definecolor{rekey}{rgb}{0,0.6,0.0}
\definecolor{Brown}{rgb}{0.45,0.0,0.05}
\usepackage{exscale,relsize}
\usepackage{amsmath}
\usepackage{amsfonts}
\usepackage{amssymb}
\usepackage{calc}
\usepackage{theorem}
\usepackage{pifont}      
\usepackage{graphicx}
\oddsidemargin -0.1cm
\textwidth  16.5cm
\topmargin  0.0cm
\headheight 0.0cm
\textheight 21.0cm
\parindent  4mm
\parskip    10pt
\tolerance  3000
\DeclareMathOperator{\weakstarly}{\rightharpoondown_{\mathrm{w*}}}

\newcommand{\wk}{\ensuremath{\operatorname{w*}}}

\newcommand{\scal}[2]{\langle{{#1},{#2}}\rangle}

\newcommand{\RR}{\ensuremath{\mathbb R}}

\newcommand{\RX}{\ensuremath{\,\left]-\infty,+\infty\right]}}
\newcommand{\RXX}{\ensuremath{\,\left[-\infty,+\infty\right]}}

\newcommand{\NN}{\ensuremath{\mathbb N}}

\newcommand{\To}{\ensuremath{\rightrightarrows}}

\newcommand{\spand}{\operatorname{span}}

\newcommand{\dom}{\ensuremath{\operatorname{dom}}}

\newcommand{\epi}{\ensuremath{\operatorname{epi}}}

\newcommand{\inte}{\ensuremath{\operatorname{int}}}

\newcommand{\menge}[2]{\big\{{#1} \mid {#2}\big\}}

\renewcommand{\phi}{\ensuremath{\varphi}}

\newtheorem{theorem}{Theorem}[section]
\newtheorem{lemma}[theorem]{Lemma}
\newtheorem{fact}[theorem]{Fact}
\newtheorem{corollary}[theorem]{Corollary}
\newtheorem{proposition}[theorem]{Proposition}

\theoremstyle{plain}{\theorembodyfont{\rmfamily}
}
\theoremstyle{plain}{\theorembodyfont{\rmfamily}
}
\theoremstyle{plain}{\theorembodyfont{\rmfamily}
}
\theoremstyle{plain}{\theorembodyfont{\rmfamily}
\newtheorem{example}[theorem]{Example}}
\theoremstyle{plain}{\theorembodyfont{\rmfamily}
\newtheorem{remark}[theorem]{Remark}}

\theoremstyle{plain}{\theorembodyfont{\rmfamily}
}


\begin{document}


\title{\sffamily{Conditions for zero duality gap in convex programming }}

\author{
Jonathan M.
Borwein\thanks{CARMA, University of Newcastle, Newcastle, New South
Wales 2308, Australia. E-mail:
\texttt{jonathan.borwein@newcastle.edu.au}.
Laureate Professor at the University of Newcastle and Distinguished Professor at  King
Abdul-Aziz University, Jeddah. },\;
Regina S. Burachik\thanks{School of Information Technology and  Mathematical Sciences,
 University of South Australia, Mawson Lakes, SA 5095, Australia. E-mail:
\texttt{regina.burachik@unisa.edu.au}.},\;
and Liangjin\
Yao\thanks{CARMA, University of Newcastle,
 Newcastle, New South Wales 2308, Australia.
E-mail:  \texttt{liangjin.yao@newcastle.edu.au}.}}

\date{April 15, revision, 2013}
\maketitle
\begin{abstract}
We introduce and study a new dual condition which characterizes zero
duality gap in nonsmooth convex optimization. We prove that our
condition is less restrictive than all existing constraint qualifications,
including the closed epigraph condition. Our dual condition was inspired by, and is less restrictive than, the
so-called Bertsekas' condition for monotropic programming problems. We
give several corollaries of our result and special cases as
applications. We pay special attention to the polyhedral and sublinear
cases, and their implications in convex optimization.
\end{abstract}

\noindent {\bfseries 2010 Mathematics Subject Classification:}\\
{Primary  49J52, 48N15;
Secondary
90C25, 90C30, 90C46}
\noindent

\noindent {\bfseries Keywords:}
Bertsekas Constraint Qualification,
Fenchel conjugate, Fenchel duality theorem,
 normal cone operator,
 inf-convolution,
$\varepsilon-$subdifferential operator,
subdifferential operator,
zero duality gap.

\noindent

\section{Introduction}

Duality theory establishes an interplay between an optimization
problem, called the {\em primal}, and another optimization problem, called
the {\em dual}. A main target  of this approach is the establishment of the so-called {\em
  zero duality gap}, which means that the optimal values of primal and
dual problems coincide. Not all convex problems enjoy the zero duality
gap property, and this has motivated the quest for assumptions on the
primal problem which ensure zero duality gap (see \cite{Tseng} and
references therein).

Recently Bertsekas considered such an assumption
for a specific convex optimization problem, called the {\em extended
  monotropic programming problem}, the origin of which goes back to
Rockafellar (see \cite{Rock81,Rock84}). Following Bo\c{t} and Csetnek
\cite{BotCse2}, we study this problem in the following setting. Let
$\{X_i\}^m_{i=1}$ be separated locally convex spaces and let
$f_i:X_i\rightarrow\RX$ be proper lower semicontinuous and convex for
every $i\in\{1,2,\ldots,m\}$. Consider the minimization problem
\begin{align*}
(P)\qquad
&p:=\inf \left(\sum_{i=1}^m f_i(x_i)\right)\quad
\text{subject to}\, (x_1,\ldots, x_m)\in S,
\end{align*}
where $S\subseteq X_1\times X_2\times\cdots X_m$ is a linear closed
subspace. The dual problem is given as follows:
\begin{align*}
(D)\qquad
d:=\sup \left(\sum_{i=1}^m -f^*_i(x_i^*)\right)\quad \text{subject to}\,(x^*_1,\ldots, x^*_m)\in S^{\bot}.
\end{align*}
We note that formulation $(P)$ includes any general convex optimization
problem. Indeed, for $X$ a separated locally convex space, and $f:X\rightarrow\RX$ a proper lower semicontinuous and convex function, consider the problem
\begin{align*}
(CP)\qquad
&\inf f(x)\quad
\text{subject to}\, x\in C,
\end{align*}
where $C$ is a closed and convex set. Problem $(CP)$ can be reformulated
as
\begin{align*}
\qquad
&\inf \left\{f(x_1)+\iota_C(x_2)\right\}\quad
\text{subject to}\, (x_1, x_2)\in S=\{(y_1,y_2)\in X\times X\::\: y_1=y_2\},
\end{align*}
where $\iota_C$ is the indicator function of $C$.

Denote by $v(P)$ and $v(D)$ the optimal values of $(P)$ and $(D)$,
respectively. In the finite dimensional setting, Bertsekas proved in
\cite[Proposition~4.1]{BerT} that a \emph{zero duality gap} holds for
problems $(P)$ and $(D)$ (i.e., $p=v(P)=v(D)=d$) under the following
condition:
\begin{align*}
\qquad&N_{S}(x)+\Big(\partial_{\varepsilon} f_1(x_1),\ldots,\partial_{\varepsilon} f_m(x_m)\Big)\quad\text{is closed} \\
&\quad\text{for every $\varepsilon>0$, $(x_1,\ldots, x_m)\in S$ and $x_i\in\dom f_i,\,\forall i\in\{1,2,\ldots,m\}$},
\end{align*}
where the sets $\partial_{\varepsilon} f_i(x_i)$ are the
epsilon-subdifferentials of the $f_i$ at $x_i$ (see
\eqref{def:eps-sub} for the definition).  In \cite[Theorem~3.2]{BotCse2}, Bo\c{t} and
Csetnek extended this result to the setting of separated locally
convex spaces.

\noindent Burachik and Majeed \cite{BurMaj} presented a zero duality gap
property for a monotropic programming problem in which the subspace
constraint $S$ in $(P)$ is replaced by a closed cone $C$, and the
orthogonal subspace $S^{\perp}$ in $(D)$ is replaced by the dual cone
$C^*:=\{x^*\mid \inf_{c\in C}\langle x^*, C\rangle\geq0\}$.
Defining $g_i:X_1\times X_2\times \cdots \times X_m\rightarrow\RX$ by  $g_i(x_1,\ldots, x_m):=f_i(x_i)$, we have
\begin{align*}
 (P)\qquad
 p&=\inf \left(\sum_{i=1}^m f_i(x_i)\right)\quad
 \text{subject to}\, (x_1,\ldots, x_m)\in C\\
 &=\inf\Big(\iota_C(x)+\sum_{i=1}^m g_i(x)\Big)\\
 (D)\qquad d&=
 \sup_{(x^*_1,\ldots, x^*_m)\in C^*} \sum_{i=1}^m -f^*_i(x_i^*),
 \end{align*}
where  $C\subseteq X_1\times X_2\times\cdots\times X_m$ is a closed convex cone. In \cite[Theorem~3.6]{BurMaj}, Burachik and Majeed proved that
\begin{equation}\label{BCQ}
\hspace{-1.2cm}\hbox{
if }\partial_{\epsilon}\iota_C (x)+\sum_{i=1}^m \partial_{\varepsilon} g_i(x) \hbox{ is weak$^*$ closed
 for every } x\in C\cap \big(\bigcap_{i=1}^m\dom  g_i\big),
\end{equation}
then $v(p)=v(D)$.
Note that $\partial_{\epsilon}\iota_C (x)+\sum_{i=1}^m
\partial_{\varepsilon} g_i(x)= \partial_{\epsilon}\iota_C
(x)+\Big(\partial_{\varepsilon} f_1(x_1),\ldots,\partial_{\varepsilon}
f_m(x_m)\Big)$.  Thence, Burachik and Majeed's result extends Bo\c{t}
and Csetnek's result and Bertsekas' result to the case of cone
constraints. From now on, we focus on a more general form of condition \eqref{BCQ}, namely
\begin{equation}\label{BCQ1}
\sum_{i=1}^m \partial_{\varepsilon} f_i(x) \hbox{ is weak$^*$ closed},
\end{equation}
where $f_i:X\rightarrow\RX$ is a proper lower semicontinuous and convex function for all $i=1,\ldots,m$. We will refer to \eqref{BCQ1} as {\em the Bertsekas Constraint Qualification}.

In none of these results, however,  is there a direct link
between \eqref{BCQ1} and the zero duality gap
property. One of the aims of this paper is to establish such a link precisely.

Another constraint qualification is the so-called {\em closed epigraph
condition}, which was first introduced by Burachik and Jeyakumar in
\cite[Theorem 1]{BurJeya} (see also \cite{BotWank1, LiNg}). This condition is stated as
\begin{equation}\label{CEC}
\epi f^*_1+\cdots+\epi f^*_m \hbox{ is weak$^*$ closed  in the topology $\omega (X^*, X)\times \RR$}.
\end{equation}

Condition \eqref{CEC} does not imply \eqref{BCQ1}. This was
recently shown in \cite[Example 3.1]{BurMaj}, in which \eqref{BCQ1}
(and hence zero duality gap) holds, while \eqref{CEC} does not.

We recall from \cite[Proposition 6.7.3]{Laurent} the following
characterization of the zero duality gap property for $(P)$ and $(D)$,
which uses the {\em infimal convolution} (see \eqref{def:infconv} for
its definition) of the conjugate functions $f^*_i$.
\begin{align*}
(P)\qquad
p&=\inf \left(\sum_{i=1}^m f_i(x)\right)=-\big(\sum_{i=1}^m f_i\big)^*(0)\\
(D)\qquad d&=-\left(f^*_1\Box\cdots \Box f^*_m\right)(0).
\end{align*}
Hence, zero duality gap is  tantamount to the equality
\[
\big(\sum_{i=1}^m
f_i\big)^*(0)=\big(f^*_1\Box\cdots \Box f^*_m\big)(0).
\]

In our main result (Theorem~\ref{Theman:1} below), we introduce a new closedness property, stated as follows.
There exists $K>0$ such for every $x\in \bigcap_{i=1}^{m}\dom f_i$ and every $\varepsilon>0$,
\begin{equation}\label{LiangjinCQ}
\overline{\left[\sum_{i=1}^{m}\partial_\varepsilon
    f_i(x)\right]}^{\wk} \subseteq
\sum_{i=1}^{m}\partial_{K\varepsilon} f_i(x).
\end{equation}
 Theorem \ref{Theman:1} below proves that this
property is equivalent to
\begin{equation}\label{Equality Condition}
\big(\sum_{i=1}^m
f_i\big)^*(x^*)=\big(f^*_1\Box\cdots \Box f^*_m\big)(x^*), \hbox{ for all } x^*\in X^*.
\end{equation}
Condition \eqref{LiangjinCQ} is easily implied by \eqref{BCQ}, since
the latter implies that \eqref{LiangjinCQ} is true for the choice
$K=1$. Hence, Theorem \ref{Theman:1} shows exactly how and why
\eqref{BCQ} implies a zero duality gap. Moreover, in view of
\cite[Theorem 1]{BurJeya}, we see that our new condition
\eqref{LiangjinCQ} is strictly less restrictive than the closed
epigraph condition. Indeed, the latter implies not only
\eqref{Equality Condition} but also exactness of the infimal
convolution everywhere. Condition\eqref{Equality Condition} with
exactness is equivalent to \eqref{CEC}.  Condition \eqref{CEC}, in turn, is less
restrictive than the interiority-type conditions.

In the present paper, we focus
on the following kind of interiority condition:
\begin{equation}\label{Interiority}
\dom f_1\cap\big(\bigcap_{i=2}^{m}\inte\dom f_i\big)\neq\varnothing.
\end{equation}

In summary, we have
\[
\begin{tabular}{ccccccc}
 && {\em Closed Epigraph Condition}\eqref{CEC}&\hspace{-.9cm}\vspace{.2cm}\begin{rotate}{225}
\LARGE{$\not\!\,\,\Downarrow$}
\end{rotate}\hspace{.2cm} \begin{rotate}{-45}
\large{$\Longrightarrow$}
\end{rotate}  &&&\\
  &\begin{rotate}{25}
\large{$\Longrightarrow$}
\end{rotate}& & &&\\
 {\em Interiority Condition}\eqref{Interiority}&&\hspace{.2cm}\Large{$\Downarrow$?} $\not\!\,\Uparrow$& &\eqref{LiangjinCQ}&$\Longleftrightarrow$&\eqref{Equality Condition}\\
  &\hspace{-.8cm}\begin{rotate}{-30}
\large{$\Longrightarrow$}
\end{rotate}& & &&\\
&&{\em Bertsekas Constraint Qualification}\eqref{BCQ1}&\hspace{-.3cm}\begin{rotate}{45}
\large{$\Longrightarrow$}
\end{rotate}  & &&
\end{tabular}
\]
Example 3.1 in \cite{BurMaj} allows us to assert that the Bertsekas
Constraint Qualification is not more restrictive than the Closed
Epigraph Condition. This example also shows that our condition
\eqref{LiangjinCQ} does not imply the closed epigraph condition. It is
still an open question whether a more precise relationship can be
established between the closed epigraph condition and Bertsekas
Constraint Qualification. The arrow linking \eqref{Interiority} to
\eqref{CEC} has been established by Z\u{a}linescu in
\cite{Zalinescu1,Zalinescu}. All other arrows are, as far as we know,
new, and are established by us in this paper. Some clarification is in
order regarding the arrow from \eqref{Interiority} to the {\em
  Bertsekas Constraint Qualification}\eqref{BCQ1}. It is clear that
for every $x_0\in \dom f_1\cap\big(\bigcap_{i=2}^{m}\inte\dom
f_i\big)$, the set $\sum_{i=1}^m \partial_{\varepsilon} f_i(x_0)
\hbox{ is weak$^*$ closed}$. Indeed, this is true because the latter
set is the sum of a weak$^*$ compact set and a weak$^*$ closed
set. Our Lemma \ref{DualCorL:Le4} establishes that, under assumption
\eqref{Interiority}, the set $\sum_{i=1}^{m}\partial_{\varepsilon}
f_i(x)$ is weak$^*$ closed {\em for every point} $x\in
\big(\bigcap_{i=1}^{m}\dom f_i\big)$.

A well-known result, which is not easily found in the literature, is
the equivalence between \eqref{CEC} and the equality \eqref{Equality
  Condition} with exactness of the infimal convolution everywehere in
$X^*$. For convenience and possible future use, we have included the
proof of this equivalence in the present paper (see Proposition \ref{35}).

The layout of our paper is as follows. The next section contains the
necessary preliminary material. Section \ref{sec:main} contains our
main result, and gives its relation with the Bertsekas Constraint Qualification
\eqref{BCQ1}, with the closed epigraph condition \eqref{CEC}, and with
the interiority conditions \eqref{Interiority}. Still in this section
we establish stronger results for the important special case in which all $f_i$s
are sublinear. We finish this section by showing that our closedness condition
allows for a simplification of the well-known Hiriart-Urruty and Phelps formula
for the subdifferential of the sum of convex functions. In Section~\ref{intepCorL} we show that (generalized) interiority
conditions imply \eqref{BCQ1}, as well as \eqref{CEC}. We also provide
some additional consequences of Corollary~\ref{DualCorL:RL4}, including
various forms of Rockafellar's Fenchel duality result. At the end of Section~\ref{intepCorL} we establish stronger
results for the case involving polyhedral functions. We end the paper with some conclusions and open questions.

\section{Preliminaries}\label{sec:pre}

Let $I$ be a directed set with a partial order $\preceq$. A subset $J$
of $I$ is said to be {\em terminal} if there exists $j_0\in I$ such
that every successor $k\succeq j_0$ verifies $k\in J$.  We say that a net
$\{s_{\alpha}\}_{\alpha\in I}\subseteq \RR$ is {\em eventually bounded} if there exists
a terminal set $J$ and $R>0$ such that $|s_{\alpha}|\le R$ for every $\alpha \in J$.

We assume throughout that $X$ is a separated (i.e., Hausdorff) locally
convex topological vector space and $X^*$ is its continuous
dual  endowed with the weak$^*$ topology $\omega(X^*,X)$. Given a subset $C$ of $X$, $\inte C$ is the \emph{interior} of
$C$. We next recall standard notions from convex analysis, which can be found, e.g., in
\cite{BC2011,BorVan,BurIus, ph, Rock70CA,RockWets,Zalinescu}.  For the
set $D\subseteq X^*$, $\overline{D}^{\wk}$ is the weak$^{*}$ closure
of $D$.  The \emph{indicator function} of $C$, written as $\iota_C$,
is defined at $x\in X$ by
\begin{align}
\iota_C (x):=\begin{cases}0,\,&\text{if $x\in C$;}\\
+\infty,\,&\text{otherwise}.\end{cases}\end{align}
The \emph{normal cone} operator of $C$ at $x$ is defined by
$N_C(x):= \menge{x^*\in X^*}{\sup_{c\in C}\scal{c-x}{x^*}\leq 0}$, if
$x\in C$; and $N_C(x):=\varnothing$, if $x\notin C$.
If $S\subseteq X$ is a subspace, we define $S^\bot$ by $S^\bot := \{z^*\in X^*\mid\langle
z^*,s\rangle= 0,\quad \forall s\in S\}$.
Let $f\colon X\to \left[-\infty, +\infty\right]$. Then
$\dom f:= f^{-1}\left[-\infty, +\infty\right[$ is the \emph{domain} (or \emph{effective domain}) of $f$, and
$f^*\colon X^*\to\RXX\colon x^*\mapsto
\sup_{x\in X}\{\scal{x}{x^*}-f(x)\}$ is
the \emph{Fenchel conjugate} of $f$.
The \emph{epigraph} of $f$ is $\epi f := \menge{(x,r)\in
X\times\RR}{f(x)\leq r}$.

The  \emph{lower semicontinuous hull} of $f$ is denoted by $\overline{f}$.
We say $f$ is proper if $\dom f\neq\varnothing$ and $f>-\infty$.
Given a  function $f$, the \emph{subdifferential} of
$f$ is the point-to-set mapping $\partial f\colon X\To X^*$ defined by
 $$\partial f(x):= \begin{cases}\{x^*\in
X^*\mid(\forall y\in X)\; \scal{y-x}{x^*} + f(x)\leq
f(y)\}\,&\text{if}\, f(x)\in\RR;\\
\varnothing\,&\text{otherwise}.
\end{cases}$$
Given $\varepsilon\ge 0$, the \emph{$\varepsilon-$subdifferential} of
$f$ is the point-to-set mapping $\partial_{\varepsilon} f\colon X\To X^*$ defined by
\begin{equation}\label{def:eps-sub}
\partial_{\varepsilon} f(x):=\begin{cases}\{x^*\in
X^*\mid(\forall y\in X)\; \scal{y-x}{x^*} + f(x)\leq
f(y)+\varepsilon\}\,&\text{if}\, f(x)\in\RR;\\
\varnothing\,&\text{otherwise}.
\end{cases}
\end{equation}
Thus, if $f$ is not proper, then $\partial_{\varepsilon} f(x)
=\varnothing$ for every $\varepsilon\geq0$ and $x\in X$.
Note also that if $f$ is convex and there exists $x_0\in X$ such that
$\overline{f}(x_0)=-\infty$, then $\overline{f}(x)=-\infty,\forall x\in\dom \overline{f}$ (see
\cite[Proposition~2.4]{EkeTem} or \cite[page~867]{BanLoZa}).

Let $f: X\rightarrow\RX$. We say $f$ is a \emph{sublinear} function
if $f(x+y)\leq f(x)+f(y)$, $f(0)=0$,  and $f(tx)=tf(x)$ for every $x,y\in\dom f$ and $t\geq0$.

Let $Z$ be a separated locally convex  space and let $m\in\NN$. For
a family of functions $\psi_1,\ldots, \psi_m$ such that
$\psi_i:Z\rightarrow\left[-\infty,+\infty\right]$ for all $i=1,\ldots,m$, we define its \emph{infimal convolution} as the function
$(\psi_1\Box\cdots \Box \psi_m):Z\to \RXX$ as
\begin{equation}\label{def:infconv}
\big(\psi_1\Box\cdots \Box \psi_m\big)z=\inf_{\sum_{i=1}^m z_i =z}\big\{\psi(z_1)+\cdots +\psi_m(z_m)\big\}.
\end{equation}

We denote by $\weakstarly$ the weak$^*$ convergence of nets in $X^*$.

\section{Our main results}\label{sec:main}

The following formula will be important in the proof of our main result.

\begin{fact}\label{BotCsCo}\emph{(See \cite[Corollary~2.6.7]{Zalinescu} or \cite[Theorem~3.1]{BotCse2}.)}
 Let $f,g:X\rightarrow\RX$ be  proper lower semicontinuous and convex. Then for every $x\in X$ and $
 \varepsilon\geq0$,
  \begin{align*}
 \partial_{\varepsilon}(f+g)(x)= \bigcap_{\eta>0}\overline{ \left[ \bigcup_{\varepsilon_1\geq0, \varepsilon_2\geq0,\, \varepsilon_1+\varepsilon_2=\varepsilon+\eta}
 \big(\partial_{\varepsilon_1} f(x)+\partial_{\varepsilon_2} g(x)\big)\right]
 }^{\wk}.
 \end{align*}
\end{fact}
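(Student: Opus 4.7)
The inclusion $\supseteq$ is immediate: given $x_1^*\in\partial_{\varepsilon_1}f(x)$ and $x_2^*\in\partial_{\varepsilon_2}g(x)$ with $\varepsilon_1+\varepsilon_2=\varepsilon+\eta$, adding their two defining inequalities yields $x_1^*+x_2^*\in\partial_{\varepsilon+\eta}(f+g)(x)$. The latter set is weak$^*$ closed, so the weak$^*$ closure of the inner union remains inside it; intersecting over $\eta>0$ and letting $\eta\downarrow 0$ in the $(\varepsilon+\eta)$-subgradient inequality yields $\partial_\varepsilon(f+g)(x)$.

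For the reverse inclusion the plan is to exploit the conjugate characterization
\[x^*\in\partial_\varepsilon(f+g)(x)\iff(f+g)(x)+(f+g)^*(x^*)\leq\scal{x}{x^*}+\varepsilon\]
together with the identity $(f+g)^*=\overline{f^*\Box g^*}^{\wk}$. I would derive this identity by biconjugation on $X^*$: the function $f^*\Box g^*$ is convex, and the standard formula $(p\Box q)^*=p^*+q^*$ applied with $p=f^*,\,q=g^*$ gives $(f^*\Box g^*)^*=f^{**}+g^{**}=f+g$ (using that $f,g$ are proper lsc convex), so the $(X,X^*)$-biconjugate of $f^*\Box g^*$, which is its weak$^*$ lsc hull, must coincide with $(f+g)^*$. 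The case $x\notin\dom f\cap\dom g$ is trivial since both sides are empty; otherwise $(f+g)^*(x^*)<+\infty$ and $f^*\Box g^*$ is automatically proper.

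Fix $\eta>0$ and an arbitrary weak$^*$ neighborhood $V$ of $x^*$. Shrink $V$ so that $|\scal{x}{y^*-x^*}|<\eta/3$ on $V$. By the definition of the weak$^*$ lsc hull I can pick $y^*\in V$ with $(f^*\Box g^*)(y^*)<(f+g)^*(x^*)+\eta/3$, then by the definition of $\Box$ a decomposition $y^*=y_1^*+y_2^*$ with $f^*(y_1^*)+g^*(y_2^*)<(f+g)^*(x^*)+2\eta/3$. Setting
\[\varepsilon_1':=f(x)+f^*(y_1^*)-\scal{x}{y_1^*},\qquad \varepsilon_2':=g(x)+g^*(y_2^*)-\scal{x}{y_2^*},\]
both are nonnegative by the Fenchel--Young inequality and witness $y_1^*\in\partial_{\varepsilon_1'}f(x)$ and $y_2^*\in\partial_{\varepsilon_2'}g(x)$. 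Summing and using the conjugate characterization for $x^*$ gives $\varepsilon_1'+\varepsilon_2'<\varepsilon+\eta$; I then redefine $\varepsilon_1:=\varepsilon_1'$ and $\varepsilon_2:=\varepsilon+\eta-\varepsilon_1\geq\varepsilon_2'$, so the monotonicity $\partial_{\varepsilon_2'}g(x)\subseteq\partial_{\varepsilon_2}g(x)$ places $y^*$ in $\partial_{\varepsilon_1}f(x)+\partial_{\varepsilon_2}g(x)$ with $\varepsilon_1+\varepsilon_2=\varepsilon+\eta$, while $y^*\in V$. Since $V$ was arbitrary, $x^*$ belongs to the stated weak$^*$ closure.

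The main obstacle is the topological identity $(f+g)^*=\overline{f^*\Box g^*}^{\wk}$ and the accompanying lsc-hull characterization asserting that $\inf_{y^*\in V}(f^*\Box g^*)(y^*)\leq (f+g)^*(x^*)$ on every weak$^*$ neighborhood $V$ of $x^*$. Once these are in hand, the remainder is routine bookkeeping with Fenchel--Young and the monotonicity $\partial_{\varepsilon_i'}\subseteq\partial_{\varepsilon_i}$ for $\varepsilon_i\geq\varepsilon_i'$.
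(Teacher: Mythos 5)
Your proof is correct. Bear in mind, though, that the paper offers no proof of this statement at all: it is imported as a ``Fact'' with pointers to Z\u{a}linescu (Corollary~2.6.7) and Bo\c{t}--Csetnek (Theorem~3.1), so there is no internal argument to compare against; what you have written is essentially the standard proof found in those references. The easy inclusion $\supseteq$ via the weak$^*$ closedness of $\partial_{\varepsilon+\eta}(f+g)(x)$ and the identity $\bigcap_{\eta>0}\partial_{\varepsilon+\eta}(f+g)(x)=\partial_{\varepsilon}(f+g)(x)$ is fine, and the hard inclusion correctly runs through the identity $(f+g)^*=\overline{f^*\Box g^*}$ (weak$^*$ lsc hull), obtained from $(f^*\Box g^*)^*=f^{**}+g^{**}=f+g$ by biconjugation, followed by Fenchel--Young bookkeeping that converts a near-optimal decomposition $y^*=y_1^*+y_2^*$ of a point $y^*$ close to $x^*$ into $\varepsilon_i$-subgradients with $\varepsilon_1+\varepsilon_2=\varepsilon+\eta$; the reallocation $\varepsilon_2:=\varepsilon+\eta-\varepsilon_1'\geq\varepsilon_2'$ is exactly the right trick to hit the constraint with equality. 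The one point worth tightening is your justification of ``biconjugate equals weak$^*$ lsc hull'': properness of $f^*\Box g^*$ alone is not the right hypothesis; what is needed is that its lsc hull is proper, which here follows because $f^*\Box g^*$ admits the weak$^*$ continuous affine minorant $y^*\mapsto\scal{x}{y^*}-(f+g)(x)$ (Fenchel--Young at the fixed $x\in\dom f\cap\dom g$) and is finite somewhere, since $\dom f^*+\dom g^*\neq\varnothing$ for proper lsc convex $f,g$. With that sentence added, the argument is complete.
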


 We now come to our main result. The proof in part follows that of \cite[Theorem~3.2]{BotCse2}.


\begin{theorem}\label{Theman:1}
Let $m\in\NN$, and
 $f_i:X\rightarrow\RX$ be proper convex with $\bigcap_{i=1}^{m}\dom f_i\neq\varnothing$,
 where $i\in\{1,2,\ldots,m\}$. Suppose  that $\overline {f_i}=f_i$  on $\bigcap_{i=1}^{m}\dom\overline {f_i}$.
 Then the following four conditions are equivalent.
 \begin{enumerate}
 \item \label{mainGed:1}
There exists $K>0$ such that for every $x\in \bigcap_{i=1}^{m}\dom f_i$, and every $\varepsilon>0$,
\begin{align*}
\overline{\left[\sum_{i=1}^{m}\partial_\varepsilon f_i(x)\right]}^{\wk}\subseteq\sum_{i=1}^{m}\partial_{K\varepsilon} f_i(x).
\end{align*}

\item \label{mainGed:2}$\left(\sum_{i=1}^m f_i\right)^*=f_1^*\Box\cdots \Box f_m^*$
in $X^*$.

\item \label{mainGed:3} $f_1^*\Box\cdots \Box f_m^*$ is weak$^*$ lower semicontinuous.

 \item  \label{mainGed:4} For every $x\in X$ and $
 \varepsilon\geq0$,
  \begin{align*}
 \partial_{\varepsilon}(f_1+\cdots+f_m)(x)= \bigcap_{\eta>0} \left[ \bigcup_{\varepsilon_i\geq0, \, \sum_{i=1}^m\varepsilon_i=\varepsilon+\eta}
 \Big(\partial_{\varepsilon_1} f_1(x)+\cdots+\partial_{\varepsilon_m} f_m(x)\Big)\right].
 \end{align*}

 \end{enumerate}
\end{theorem}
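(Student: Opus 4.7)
The plan is to pivot around the auxiliary function $h:=f_1^*\Box\cdots\Box f_m^*$ and the conjugate identity $h^{**}=(\sum_i f_i)^*$. Using the standard formula $(f_1^*\Box\cdots\Box f_m^*)^*=\sum_i f_i^{**}$ together with the running hypothesis $\overline{f_i}=f_i$ on $\bigcap_j\dom\overline{f_j}$, I first check that $\bigcap_i\dom\overline{f_i}=\bigcap_i\dom f_i$ (the inclusion $\supseteq$ is automatic; the reverse uses the hypothesis), whence $\sum_i f_i=\sum_i\overline{f_i}=\sum_i f_i^{**}$ on all of $X$: both sides agree on the common intersection and equal $+\infty$ elsewhere. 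Thus $h^*=\sum_i f_i$, and consequently $h^{**}=(\sum_i f_i)^*$. Since $h$ is convex and proper (bounded below by $(\sum_i f_i)^*$ and finite on $\sum_i\dom f_i^*$), the standard biconjugate theorem ``$h$ is $\wk$-lsc iff $h=h^{**}$'' yields \eqref{mainGed:2}$\iff$\eqref{mainGed:3} immediately, as \eqref{mainGed:2} reads exactly $h=(\sum_i f_i)^*=h^{**}$.

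For \eqref{mainGed:2}$\Rightarrow$\eqref{mainGed:4}, let $x^*\in\partial_\varepsilon(\sum_i f_i)(x)$ and $\eta>0$. By \eqref{mainGed:2}, pick a near-infimal decomposition $x^*=\sum_i x_i^*$ with $\sum_i f_i^*(x_i^*)\leq(\sum_i f_i)^*(x^*)+\eta$. The Fenchel--Young defects $\delta_i:=f_i^*(x_i^*)+f_i(x)-\scal{x}{x_i^*}\geq 0$ sum to at most $\varepsilon+\eta$; padding them to sum exactly $\varepsilon+\eta$ gives $x_i^*\in\partial_{\delta_i}f_i(x)$, proving the nontrivial inclusion of \eqref{mainGed:4} (the reverse is the standard ``sum of $\varepsilon_i$-subgradients is a $(\sum\varepsilon_i)$-subgradient of the sum''). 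For \eqref{mainGed:4}$\Rightarrow$\eqref{mainGed:1}, note that $\sum_i\partial_\varepsilon f_i(x)\subseteq\partial_{m\varepsilon}(\sum_i f_i)(x)$ and the latter is $\wk$-closed, so $\overline{\sum_i\partial_\varepsilon f_i(x)}^{\wk}\subseteq\partial_{m\varepsilon}(\sum_i f_i)(x)$; applying \eqref{mainGed:4} at level $m\varepsilon$ with $\eta=\varepsilon$ embeds this subdifferential into $\sum_i\partial_{(m+1)\varepsilon}f_i(x)$, so the constant $K=m+1$ works.

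The delicate direction, and the main obstacle, is \eqref{mainGed:1}$\Rightarrow$\eqref{mainGed:3}. Assume \eqref{mainGed:1}; it suffices to prove $h\leq h^{**}$. Fix $x^*\in X^*$ with $h^{**}(x^*)<\infty$ and $r>h^{**}(x^*)$. Because $h^{**}$ coincides with the $\wk$-lsc hull of the convex proper function $h$, there is a net $(y^*_\alpha,r_\alpha)\in\epi h$ with $y^*_\alpha\weakstarly x^*$ and $r_\alpha\to r$, and for each $\alpha$ a decomposition $y^*_\alpha=\sum_i x^*_{i,\alpha}$ with $\sum_i f_i^*(x^*_{i,\alpha})\leq r_\alpha+\zeta$ for a fixed small $\zeta>0$. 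Fix any $y\in\bigcap_i\dom f_i$: Fenchel--Young forces each $x^*_{i,\alpha}$ eventually into $\partial_D f_i(y)$, where $D:=r+O(\zeta)+\sum_i f_i(y)-\scal{y}{x^*}$. Hence $x^*\in\overline{\sum_i\partial_D f_i(y)}^{\wk}\subseteq\sum_i\partial_{KD}f_i(y)$ by \eqref{mainGed:1}, producing a decomposition $x^*=\sum_i z_i^*$ with $z_i^*\in\partial_{KD}f_i(y)$. A second Fenchel--Young pass on this decomposition bounds $h(x^*)\leq\sum_i f_i^*(z_i^*)\leq(mK-1)D+r+O(\zeta)$. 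The key trick is the final optimization: choosing $y$ so that $\scal{y}{x^*}-\sum_i f_i(y)\uparrow(\sum_i f_i)^*(x^*)=h^{**}(x^*)$ sends $D\downarrow r+O(\zeta)-h^{**}(x^*)$; then letting $\zeta\to 0$ and $r\downarrow h^{**}(x^*)$ collapses the estimate $mKr-(mK-1)h^{**}(x^*)+O(\zeta)$ to $h(x^*)\leq h^{**}(x^*)$. The subtle point is that the constant $K$ is baked into the Fenchel--Young estimate and seemingly inflates the bound by a factor, but the optimization over $y$ exactly absorbs the factor $mK-1$ against the shrinking slack $D-r$.
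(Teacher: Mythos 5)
Your argument is correct, and on the two nontrivial directions it departs genuinely from the paper's own proof. The paper establishes \ref{mainGed:1}$\Rightarrow$\ref{mainGed:2} by picking $x\in\partial_{\varepsilon}\big(\sum_i f_i\big)^*(x^*)$ and iterating the Hiriart-Urruty--Z\u{a}linescu formula for $\partial_\varepsilon(f+g)$ (Fact~\ref{BotCsCo}) to land in $\overline{\left[\sum_i\partial_{m\varepsilon}f_i(x)\right]}^{\wk}$; you instead reach the set $\overline{\left[\sum_i\partial_{D}f_i(y)\right]}^{\wk}$ directly from a net in $\epi h$, $h:=f_1^*\Box\cdots\Box f_m^*$, using only Fenchel--Young, at the price of the closing optimization over the base point $y$ that absorbs the factor $mK-1$ against the shrinking slack $D-r$. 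This removes the dependence on Fact~\ref{BotCsCo}, the one nontrivial external ingredient of the paper's argument. Likewise you get \ref{mainGed:3}$\Rightarrow$\ref{mainGed:2} in one step from Fenchel--Moreau applied to $h$ (via $h^*=\sum_i f_i$, hence $h^{**}=\big(\sum_i f_i\big)^*$), whereas the paper routes \ref{mainGed:3}$\Rightarrow$\ref{mainGed:1}$\Rightarrow$\ref{mainGed:2} through a separate net argument; your \ref{mainGed:2}$\Rightarrow$\ref{mainGed:4} and \ref{mainGed:4}$\Rightarrow$\ref{mainGed:1} (with $K=m+1$) coincide with the paper's. One bookkeeping point should be made explicit: both the identity $f_i^{**}=\overline{f_i}$ and the claim that $\sum_i\overline{f_i}=+\infty$ off $\bigcap_i\dom\overline{f_i}$ require each $\overline{f_i}$ to be proper. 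As in the paper's opening paragraph, this follows from the hypothesis $\overline{f_i}=f_i$ on $\bigcap_{j}\dom\overline{f_j}\supseteq\bigcap_{j}\dom f_j\neq\varnothing$ together with the fact that an improper lsc convex function equals $-\infty$ throughout its domain; with that inserted, the biconjugation pivot and the final estimate $h(x^*)\leq(mK-1)D+r+O(\zeta)\rightarrow h^{**}(x^*)$ are sound.
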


\begin{proof}
First we show that our basic assumptions imply that $\overline{f_i}$ is proper for every $i\in\{1,2,\ldots,m\}$.
Let $i\in\{1,2,\ldots,m\}$.

Since $\varnothing\neq\big(\bigcap_{j=1}^{m}\dom f_j\big)\subseteq\big(\bigcap_{j=1}^{m}\dom\overline {f_j}\big)$, then
$\bigcap_{j=1}^{m}\dom\overline {f_j}\neq\varnothing$.
Let $x_0\in \bigcap_{i=j}^{m}\dom\overline {f_j}$. Suppose to the contrary that
$\overline{f_i}$ is not proper and thus there exists $y_0\in X$ such that
$\overline{f_i}(y_0)=-\infty$. Then by \cite[Proposition~2.4]{EkeTem}, $\overline{f_i}(x_0)=-\infty$. By the assumption,
$\overline{f_i}(x_0)=f_i(x_0)>-\infty$, which is a contradiction. Hence $\overline{f_i}$ is proper.

\ref{mainGed:1}$\Rightarrow$\ref{mainGed:2}:
Let $x^*\in X^*$.
Clearly, we have $\left(f_1^*\Box\cdots \Box f_m^*\right)(x^*)\geq \left(\sum_{i=1}^m f_i\right)^*(x^*)$. It suffices to show that
\begin{align}\left(\sum_{i=1}^m f_i\right)^*(x^*)\geq\left(f_1^*\Box\cdots \Box f_m^*\right)(x^*).\label{Redua:L1}\end{align}
First we show that
\begin{align}
\sum_{i=1}^m\overline{f_i}(y)\geq \sum_{i=1}^m f_i(y),\quad \forall y\in X.\label{Redua:L2}
\end{align}
Indeed, let $y\in X$. If $y\not\in\bigcap_{i=1}^{m}\dom\overline
{f_i}$.  Clearly, \eqref{Redua:L2} holds. Now assume that
$y\in\bigcap_{i=1}^{m}\dom\overline {f_i}$.  By our assumption
$\overline {f_i}(y)=f_i(y)$, we conclude that \eqref{Redua:L2}
holds. Combining both cases, we conclude that \eqref{Redua:L2} holds everywhere.

Since $\sum_{i=1}^m\overline{f_i}\leq \sum_{i=1}^m f_i$,  \eqref{Redua:L2} implies that
\begin{align}
\sum_{i=1}^m\overline{f_i}= \sum_{i=1}^m f_i. \label{Redua:L3d}
\end{align}
Taking the lower semicontinuous hull in the equality above, we have
\begin{align}
\sum_{i=1}^m\overline{f_i}= \sum_{i=1}^m f_i=\overline{f_1+\cdots+f_m}.\label{Redua:L3}
\end{align}

Clearly, if $(\sum_{i=1}^m f_i)^*(x^*)=+\infty$, then \eqref{Redua:L1}
holds.  Now assume that $(\sum_{i=1}^m f_i)^*(x^*)<+\infty$.  Then we
have $(\sum_{i=1}^m f_i)^*(x^*)\in\RR$ and thus $x^*\in\dom
(\sum_{i=1}^m f_i)^*$. Since $(\sum_{i=1}^m f_i)^*$ is lower
semicontinuous, given $\varepsilon>0$, there exists $x\in X$ such
that $x\in \partial _{\varepsilon} (\sum_{i=1}^m f_i)^* (x^*)$. Then
\begin{align*}(\sum_{i=1}^m f_i)^*(x^*)+\overline{(\sum_{i=1}^m f_i)}(x)&=
(\sum_{i=1}^m f_i)^*(x^*)+(\sum_{i=1}^m f_i)^{**}(x)
\leq \langle x, x^*\rangle+\varepsilon.
\end{align*}
By \eqref{Redua:L3}, we have
\begin{align*}(\sum_{i=1}^m f_i)^*(x^*)+(\sum_{i=1}^m f_i)(x)=(\sum_{i=1}^m \overline{f_i})^*(x^*)+(\sum_{i=1}^m \overline{f_i})(x)\leq \langle x, x^*\rangle+\varepsilon.
\end{align*}
Hence \begin{align}x^*\in \partial_{\varepsilon} (\sum_{i=1}^m f_i)(x)\quad\text{and}\quad x^*\in \partial_{\varepsilon} (\sum_{i=1}^m \overline{f_i})(x).\label{Redua:L2a}
\end{align}
\allowdisplaybreaks
Next, we claim that there exists $K>0$ such that
\begin{align}
x^*\in \sum_{i=1}^m\partial_{Km\varepsilon}f_i(x).\label{GeDu:L1}
\end{align}
Set $f:=\overline{f_1}$, $g:=(\sum_{i=2}^m \overline{f_i})$, and $\eta=\varepsilon$ in
Fact~\ref{BotCsCo}, and use \eqref{Redua:L2a} to write
\begin{align*}
&x^*\in \partial_{\varepsilon}(\sum_{i=1}^m \overline{f_i})(x)
\Rightarrow x^*\in \overline{ \left[
\partial_{2\varepsilon} \overline{f_1}(x)+\partial_{2\varepsilon} (\sum_{i=2}^m \overline{f_i})(x)\right]
 }^{\wk}.
\end{align*}
We repeat the same idea with $f:=\overline{f_2}$, $g:=(\sum_{i=3}^m \overline{f_i})$ in
Fact~\ref{BotCsCo}, and continue iteratively to obtain
\begin{align*}
 &\Rightarrow x^*\in \overline{ \left[
    \partial_{2\varepsilon} \overline{f_1}(x)+\overline{
      \partial_{3\varepsilon}
      \overline{f_2}(x)+\partial_{3\varepsilon}(\sum_{i=3}^m
      \overline{f_i})(x)}^{\wk}\right] }^{\wk}\\ &\Rightarrow x^*\in
\overline{ \left[ \partial_{2\varepsilon} \overline{f_1}(x)+
    \partial_{3\varepsilon}
    \overline{f_2}(x)+\partial_{3\varepsilon}(\sum_{i=3}^m
    \overline{f_i})(x)\right] }^{\wk}\\ &\quad \cdots\\ &\Rightarrow
x^*\in \overline{ \left[ \partial_{2\varepsilon} \overline{f_1}(x)+
    \partial_{3\varepsilon}
    \overline{f_2}(x)+\cdots+\partial_{m\varepsilon}\overline{f_m}(x)\right]
}^{\wk}\\  &\Rightarrow x^*\in \overline{ \left[
    \partial_{2\varepsilon} f_1(x)+ \partial_{3\varepsilon}
    f_2(x)+\cdots+\partial_{m\varepsilon}f_m(x)\right] }^{\wk}\quad
\text{(by \eqref{Redua:L2a} and $\overline{f_i}(x)=f_i(x),\forall
  i$)}\\ &\Rightarrow x^*\in \overline{ \left[ \partial_{m\varepsilon}
    f_1(x)+ \partial_{m\varepsilon}
    f_2(x)+\cdots+\partial_{m\varepsilon}f_m(x)\right]
}^{\wk}.
\end{align*}
By assumption \ref{mainGed:1}, the last inclusion implies that there exists $K>0$ such that
\begin{align*}
&x^*\in \partial_{Km\varepsilon} f_1(x)+
\partial_{Km\varepsilon}
f_2(x)+\cdots+\partial_{Km\varepsilon}f_m(x)\quad
 \end{align*}
Hence \eqref{GeDu:L1} holds. Thus, there exists $y^*_i\in\partial_{K\,m\varepsilon} f_i(x)$ such that
$x^*=\sum_{i=1}^{m} y^*_i$ and
\begin{align*}
f_i^*(y_i^*)+f_i(x)&\leq\langle x, y^*_i\rangle +Km\varepsilon,\quad
\forall i\in\{1,2,\ldots,m\}.
\end{align*}
Thus,
\begin{align*}(f_1^*\Box\cdots \Box f_m^*)(x^*)\leq
\sum_{i=1}^m f_i^*(y_i^*)&\leq-\sum_{i=1}^m f_i(x)+\langle x, x^*\rangle +Km^2\varepsilon\\
&\leq
(\sum_{i=1}^m f_i)^* (x^*) +Km^2\varepsilon.
\end{align*}
Letting $\varepsilon\longrightarrow 0$ in the above inequality, we have
\begin{align*}\left(f_1^*\Box\cdots \Box f_m^*\right)(x^*)\leq
\left(\sum_{i=1}^m f_i\right)^* (x^*).
\end{align*}
Hence \eqref{Redua:L1} holds and so
\begin{align}\left(\sum_{i=1}^m f_i\right)^*=f_1^*\Box\cdots \Box f_m^*.\end{align}

\ref{mainGed:2}$\Rightarrow$\ref{mainGed:3}: This clearly follows from the
lower semicontinuity of $(\sum_{i=1}^mf_i)^*$.

\ref{mainGed:3}$\Rightarrow$\ref{mainGed:1}:
Let $x\in \bigcap_{i=1}^{m}\dom f_i$ and $\varepsilon>0$, and
$x^*\in\overline{\left[\sum_{i=1}^{m}\partial_\varepsilon f_i(x)\right]}^{\wk}$.
Then for each $i=1,\ldots,m$ there exists a net $(x^*_{i,\alpha})_{\alpha\in I}$ in $\partial_\varepsilon f_i(x)$ such that
\begin{align}
\sum_{i=1}^{m}x^*_{i,\alpha}\weakstarly x^*.\label{GeDu:DL5}
\end{align}
We have
\begin{align}
f_i(x)+ f^*_i(x^*_{i,\alpha})\leq\langle x, x^*_{i,\alpha}\rangle+\varepsilon,\quad
\forall i\in\{1,2,\ldots,m\}\quad\forall \alpha\in I.
\end{align}
Thus
\begin{align}
\sum_{i=1}^mf_i(x)+(f_1^*\Box\cdots \Box f_m^*) (\sum_{i=1}^m x^*_{i,\alpha})\leq
\sum_{i=1}^mf_i(x)+ \sum_{i=1}^m f^*_i(x^*_{i,\alpha})\leq\langle x, \sum_{i=1}^m x^*_{i,\alpha}\rangle+m\varepsilon,\quad
\forall \alpha\in I.\label{GeDu:DL6}
\end{align}
Since $f_1^*\Box\cdots \Box f_m^*$ is
weak$^*$ lower semicontinuous,  it follows from \eqref{GeDu:DL6} and
\eqref{GeDu:DL5} that
\begin{align}
\sum_{i=1}^mf_i(x)+(f_1^*\Box\cdots \Box f_m^*) (x^*)
\leq\langle x, x^*\rangle+m\varepsilon.\label{GeDu:DL7}
\end{align}
There exists $y^*_i\in X^*$ such that
$\sum_{i=1}^m y^*_i= x^*$ and $\sum_{i=1}^m f^*_i(y^*_i)\leq
(f_1^*\Box\cdots \Box f_m^*) (x^*)+\varepsilon$.
Then by \eqref{GeDu:DL7},
\begin{align*}
\sum_{i=1}^mf_i(x)+\sum_{i=1}^m f^*_i(y^*_i)
\leq\langle x, x^*\rangle+(m+1)\varepsilon.
\end{align*}
Thus, we have
\begin{align*}
y^*_i\in\partial_{(m+1)\varepsilon} f_i(x),\quad
\forall i\in\{1,2,\ldots,m\}.
\end{align*}
Hence
\begin{align*}
x^*=\sum_{i=1}^m y^*_i\in\sum_{i=1}^m\partial_{(m+1)\varepsilon} f_i(x),
\end{align*}
and the statement in (i) holds for $K:=(m+1)$.

\ref{mainGed:2}$\Rightarrow$\ref{mainGed:4}:
Let $x\in X$ and $\varepsilon\geq0$.

We have \begin{align*}
&\bigcap_{\eta>0} \left[ \bigcup_{\varepsilon_i\geq0, \, \sum_{i=1}^m\varepsilon_i=\varepsilon+\eta}
 \Big(\partial_{\varepsilon_1} f_1(x)+\cdots+\partial_{\varepsilon_m} f_m(x)\Big)\right]\\
 &\subseteq \bigcap_{\eta>0} \left[ \bigcup_{\varepsilon_i\geq0, \, \sum_{i=1}^m\varepsilon_i=\varepsilon+\eta}
 \partial_{\sum_{i}^m\varepsilon_i}(f_1+\cdots+f_m)(x)\right]= \bigcap_{\eta>0}\partial_{\varepsilon+\eta}(f_1+\cdots+f_m)(x)\\
&=\partial_{\varepsilon}(f_1+\cdots+f_m)(x).
\end{align*} Now we show the other inclusion:
\begin{align}
\partial_{\varepsilon}(f_1+\cdots+f_m)(x)\subseteq\Big(\bigcap_{\eta>0} \left[ \bigcup_{\varepsilon_i\geq0, \, \sum_{i=1}^m\varepsilon_i=\varepsilon+\eta}
 \Big(\partial_{\varepsilon_1} f_1(x)+\cdots+\partial_{\varepsilon_m} f_m(x)\Big)\right]\Big).\label{MainEx:1}
\end{align}
Let $x^*\in \partial_{\varepsilon}(f_1+\cdots+f_m)(x)$. Then  we have
$\sum_{i=1}^m f_i(x)+(\sum f_i)^*(x^*)\leq\langle x, x^*\rangle+\varepsilon$.
By \ref{mainGed:2}, we have
\begin{align}
\sum_{i=1}^m f_i(x)+\big(f_1^*\Box\cdots \Box f_m^*\big)(x^*)\leq\langle x, x^*\rangle+\varepsilon.
\label{MainEx:2}
\end{align}
Let $\eta>0$. Then there exists $y^*_i\in X^*$ such that $\sum_{i=1}^m y^*_i=x^*$
and
$\sum_{i=1}^m f^*_i(y^*_i)\leq \big(f_1^*\Box\cdots \Box f_m^*\Big)(x^*)+\eta$. Then by \eqref{MainEx:2},
\begin{align}
\sum_{i=1}^m f_i(x)+\sum_{i=1}^m f^*_i(y^*_i)\leq\langle x, x^*\rangle+\varepsilon+\eta.
\label{MainEx:3}
\end{align}
Set $\gamma_i:=f_i(x)+f^*_i(y^*_i)-\langle x, y^*_i\rangle$.
Then $\gamma_i\geq0$ and $y^*_i\in\partial_{\gamma_i} f_i(x)$.  By \eqref{MainEx:3},
\begin{align}
\langle x, x^*\rangle+\sum_{i=1}^m \gamma_i=\sum_{i=1}^m \left[\langle x, y^*_i\rangle+\gamma_i\right]
\leq\langle x, x^*\rangle+\varepsilon+\eta.
\end{align}
Hence $\sum_{i=1}^m \gamma_i\leq\varepsilon+\eta$.
Set $\varepsilon_1:=\varepsilon+\eta-\sum_{i=2}^m \gamma_i$ and $\varepsilon_i:=\gamma_i$ for every $i=\{2,3,\ldots,m\}$.
Then $\varepsilon_1\geq\gamma_1$ and we have
\[
x^*=\sum_{i=1}^m y^*_i\in  \sum_{i=1}^m \partial_{\varepsilon_i} f_i(x).
\]
Hence $x^*\in \bigcup_{\varepsilon_i\geq0, \, \sum_{i=1}^m\varepsilon_i=\varepsilon+\eta}
 \Big(\partial_{\varepsilon_i} f_i(x)+\cdots+\partial_{\varepsilon_m} f_m(x)\Big)$ and therefore \eqref{MainEx:1} holds.

\ref{mainGed:4}$\Rightarrow$\ref{mainGed:1}: Let $x\in \bigcap_{i=1}^{m}\dom f_i$, $\varepsilon>0$, and
$x^*\in\overline{\left[\sum_{i=1}^{m}\partial_\varepsilon f_i(x)\right]}^{\wk}$.
Then for each $i=1,\ldots,m$ there exists a net $(x^*_{i,\alpha})_{\alpha\in I}$ in $\partial_\varepsilon f_i(x)$ such that
\begin{align}
\sum_{i=1}^{m}x^*_{i,\alpha}\weakstarly x^*,\label{MainEx:5}
\end{align}
and this implies that
\begin{align}
\sum_{i=1}^{m}x^*_{i,\alpha}\in\Big(\bigcap_{\eta>0} \left[ \bigcup_{\varepsilon_i\geq0, \, \sum_{i=1}^m\varepsilon_i=m\varepsilon+\eta}
 \Big(\partial_{\varepsilon_1} f_1(x)+\cdots+\partial_{\varepsilon_m} f_m(x)\Big)\right]\Big).
\end{align}
Assumption \ref{mainGed:4} yields $\sum_{i=1}^{m}x^*_{i,\alpha}\in\partial_{m\varepsilon}(f_1+\cdots+f_m)(x)$.
Since $\partial_{m\varepsilon}(f_1+\cdots+f_m)(x)$ is weak$^*$ closed,
\eqref{MainEx:5} shows that $x^*\in \partial_{m\varepsilon}(f_1+\cdots+f_m)(x)$. Using \ref{mainGed:4} again for $\eta=\varepsilon$, we conclude that
$x^*\in\Big(\partial_{(m+1)\varepsilon} f_i(x)+\cdots+\partial_{(m+1)\varepsilon} f_m(x)\Big)$.

Therefore, statement \ref{mainGed:1} holds for $K:=m+1$.
\end{proof}

\begin{remark} \begin{enumerate}
 \item [(a)]
We point out that the proof of Theorem~\ref{Theman:1}\ref{mainGed:1}
actually shows that $K=m+1$, and this constant is {\em independent of the
functions} $f_1,\ldots,f_m$.
\item[(b)]
Part \ref{mainGed:1} implies \ref{mainGed:2} of Theorem~\ref{Theman:1} generalizes \cite[Proposition~4.1]{BerT},
\cite[Theorem~3.2]{BotCse2} by Bo\c{t} and Csetnek, and \cite[Theorem~3.6]{BurMaj} by
Burachik and Majeed.
\item[(c)]
  A result similar to
  Theorem~\ref{Theman:1}\ref{mainGed:3}$\Leftrightarrow$\ref{mainGed:4}
  has been established in \cite[Corollary~3.9]{BotGrad} by Bo\c{t} and
  Grad.
  
  \end{enumerate}
\end{remark}
An immediate corollary follows:

\begin{corollary}\label{TheodualM:1}
Let $f, g:X\rightarrow\RX$ be proper convex with $\dom f\cap\dom g\neq\varnothing$. Suppose that $\overline {f}=f$ and  $\overline {g}=g$ on $\dom\overline {f}\cap\dom\overline {g}$. Suppose also that
for every $x\in \dom f\cap\dom g$ and $\varepsilon>0$,
\begin{align*}
\partial_\varepsilon f(x)+\partial_\varepsilon g(x)\quad\text{is weak$^*$ closed}.
\end{align*}

Then $(f+g)^*=f^*\Box g^*$ in $X^*$.
Consequently, $\inf (f+g)=\sup_{x^*\in X^*}\{-f^*(x^*)-g^*(-x^*)\}$.
\end{corollary}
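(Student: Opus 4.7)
The plan is to apply Theorem~\ref{Theman:1} directly with $m=2$, $f_1:=f$, and $f_2:=g$. The hypotheses of that theorem are met verbatim: $\dom f\cap\dom g\neq\varnothing$, and $\overline{f}=f$, $\overline{g}=g$ on $\dom\overline{f}\cap\dom\overline{g}$. So I only need to verify condition~\ref{mainGed:1}, which in the two-function case asks for some $K>0$ such that for every $x\in\dom f\cap\dom g$ and every $\varepsilon>0$,
\begin{equation*}
\overline{\left[\partial_\varepsilon f(x)+\partial_\varepsilon g(x)\right]}^{\wk}\subseteq\partial_{K\varepsilon} f(x)+\partial_{K\varepsilon} g(x).
\end{equation*}
By the weak$^*$-closedness assumption, the left-hand side coincides with $\partial_\varepsilon f(x)+\partial_\varepsilon g(x)$, so the inclusion holds trivially with $K=1$. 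Invoking the implication \ref{mainGed:1}$\Rightarrow$\ref{mainGed:2} of Theorem~\ref{Theman:1} then yields $(f+g)^*=f^*\Box g^*$ on all of $X^*$.

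For the ``consequently'' clause I would evaluate this identity at $x^*=0$. From the definition of the Fenchel conjugate,
\begin{equation*}
(f+g)^*(0)=\sup_{x\in X}\{-(f+g)(x)\}=-\inf_{x\in X}(f+g)(x),
\end{equation*}
while directly from the definition of the infimal convolution,
\begin{equation*}
(f^*\Box g^*)(0)=\inf_{x^*\in X^*}\{f^*(x^*)+g^*(-x^*)\}.
\end{equation*}
Equating the right-hand sides and multiplying by $-1$ produces $\inf(f+g)=\sup_{x^*\in X^*}\{-f^*(x^*)-g^*(-x^*)\}$, which is the Fenchel duality identity claimed.

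No substantive obstacle is expected: the statement is a direct specialization of Theorem~\ref{Theman:1} to $m=2$, together with the standard translation between values of conjugates at the origin and primal/dual Fenchel values. The only bookkeeping point worth double-checking is that the range $x\in\dom f\cap\dom g$ in the hypothesis exactly matches the quantifier $x\in\bigcap_{i=1}^{m}\dom f_i$ appearing in condition~\ref{mainGed:1} of the theorem, which it does, and that the closedness hypothesis delivers \emph{equality} (and hence the trivial inclusion) rather than merely the inclusion of the closure in some larger set.
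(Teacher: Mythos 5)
Your proposal is correct and matches the paper's intent exactly: the paper presents this corollary as an immediate consequence of Theorem~\ref{Theman:1} with $m=2$, where the weak$^*$-closedness hypothesis makes condition~\ref{mainGed:1} hold with $K=1$, and the Fenchel duality statement follows by evaluating the conjugate identity at $0$. Nothing is missing.
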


Note that, for a linear subspace $S\subseteq X$, we have
$\partial_\varepsilon \iota_S =S^{\perp}$. Taking this into account we
derive the Bertsekas Constraint Qualification result from Theorem ~\ref{Theman:1}.


\begin{corollary}[Bertsekas]\emph{(See \cite[Proposition~4.1]{BerT}.)}\label{TheodualM:2}
Let $m\in\NN$ and suppose that $X_i$ is a finite dimensional space,  and let $f_i:X_i\rightarrow\RX$ be proper lower semicontinuous and  convex, where
$i\in\{1,2,\ldots,m\}$. Let $S$ be a linear subspace of $X_1\times X_2\times\cdots\times X_m$ with
 $S\cap \big(\bigcap_{i=1}^{m}\dom f_i\big)\neq\varnothing$.   Define $g_i:X_1\times X_2\times \cdots \times X_m\rightarrow\RX$ by  $g_i(x_1,\ldots, x_m):=f_i(x_i)$. Assume that for every
$x\in S\cap \big(\bigcap_{i=1}^{m}\dom{f_i}\big)$ and for every $\varepsilon>0$ we have that
\begin{align*}
S^{\perp}+\sum^m_{i=1}\partial_\varepsilon g_i(x)\quad\text{is  closed}.
\end{align*}
Then  $v(P)=\inf_{x\in S} \{\sum^m_{i=1} f_i(x)\}=\sup_{x^*\in S^{\perp}}\{-\sum^m_{i=1}f_i^*(x^*)\}=v(D)$.
\end{corollary}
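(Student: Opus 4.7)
The plan is to apply Theorem~\ref{Theman:1} to the $m+1$ proper lower semicontinuous convex functions $\iota_S, g_1, \ldots, g_m$ on the product space $X:=X_1\times\cdots\times X_m$. Since the $X_i$ are finite dimensional, $S$ is automatically closed, so $\iota_S$ is lower semicontinuous; each $g_i$ inherits lower semicontinuity from $f_i$. Thus every function in the family equals its own lower semicontinuous hull, and the intersection $S\cap\bigcap_i\dom g_i=\{(x_1,\ldots,x_m)\in S:x_i\in\dom f_i\ \forall i\}$ is nonempty by hypothesis, so the basic assumptions of Theorem~\ref{Theman:1} are satisfied.

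The key step is to recognize that the Bertsekas closedness condition coincides with condition~\ref{mainGed:1} of Theorem~\ref{Theman:1} for this family, with $K=1$. For any $x\in S$ and $\varepsilon\geq 0$, the definition of the $\varepsilon$-subdifferential forces $\partial_\varepsilon\iota_S(x)=S^{\perp}$, since $x^*\in\partial_\varepsilon\iota_S(x)$ requires $\langle y-x,x^*\rangle\leq\varepsilon$ for every $y\in S$ and $S-x=S$ is a subspace. Consequently the hypothesis that
\[
\partial_\varepsilon\iota_S(x)+\sum_{i=1}^m\partial_\varepsilon g_i(x)=S^{\perp}+\sum_{i=1}^m\partial_\varepsilon g_i(x)\ \text{is closed}
\]
for every $x\in S\cap\bigcap_i\dom g_i$ and every $\varepsilon>0$ is exactly the Bertsekas assumption. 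Since $X$ is finite dimensional, closed sets are weak$^*$ closed, so this yields the inclusion required in~\ref{mainGed:1} with $K=1$.

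By the implication \ref{mainGed:1}$\Rightarrow$\ref{mainGed:2} of Theorem~\ref{Theman:1},
\[
\Big(\iota_S+\sum_{i=1}^m g_i\Big)^{*}=\iota_S^{*}\Box g_1^{*}\Box\cdots\Box g_m^{*}\quad \text{on } X^{*}.
\]
Evaluating at $0$, the left-hand side equals $-v(P)$ because $\inf_{x\in X}\bigl(\iota_S(x)+\sum_i g_i(x)\bigr)=\inf_{x\in S}\sum_i f_i(x_i)=v(P)$. A routine computation gives $\iota_S^{*}=\iota_{S^{\perp}}$ and, for each $i$, $g_i^{*}(y^{*})=f_i^{*}(y_i^{*})$ if $y_j^{*}=0$ for $j\neq i$ and $+\infty$ otherwise; unwinding the infimal convolution at $0$ yields $\inf_{y^{*}\in S^{\perp}}\sum_{i=1}^{m}f_i^{*}(y_i^{*})$, whence $v(P)=\sup_{y^{*}\in S^{\perp}}\bigl(-\sum_i f_i^{*}(y_i^{*})\bigr)=v(D)$. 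The only nontrivial steps are the identification $\partial_\varepsilon\iota_S(x)=S^{\perp}$ and the computation of $g_i^{*}$; there is no real obstacle, only the need to align the Bertsekas condition with the correct family of $m+1$ functions in Theorem~\ref{Theman:1}.
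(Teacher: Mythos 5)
Your proposal is correct and follows exactly the route the paper intends: the paper gives no separate proof beyond the remark preceding the corollary that $\partial_\varepsilon\iota_S = S^{\perp}$ for a linear subspace, from which the result is derived by applying Theorem~\ref{Theman:1} to the family $\iota_S, g_1,\ldots,g_m$ with the Bertsekas hypothesis supplying condition \ref{mainGed:1} with $K=1$, and then evaluating the conjugate identity of \ref{mainGed:2} at $0$. Your identification of $\partial_\varepsilon\iota_S(x)=S^{\perp}$, the computation of $g_i^*$, and the unwinding of the infimal convolution at the origin are all as the authors intend.
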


The following example which is due to \cite[Example~3.1]{BurMaj} and \cite[Example, page~2798]{BotWank1},
 shows that the infimal convolution  in Corollary~\ref{TheodualM:1} is not always achieved (exact).

\begin{example}\label{ExamDul:1}
Let $X=\RR^2$, and  $f:=\iota_C, g:=\iota_D$, where
$C:=\{(x,y)\in\RR^2\mid 2x+y^2\leq0\}$ and $D:=\{(x,y)\in\RR^2\mid x\geq0\}$.  Then $f$ and $g$ are proper lower semicontinuous and convex with $\dom f\cap\dom g=\{(0,0)\}$. For every $\varepsilon>0$, $\partial_\varepsilon f(0,0)+\partial_\varepsilon g(0,0)$ is closed.
Hence $(f+g)^*=f^*\Box g^*$. But
$f^*\Box g^*$ is not exact everywhere and $\partial (f+g)(0)\neq\partial f(0)+\partial g(0)$. Consequently, $\epi f^*+\epi g^*$ is not closed in the topology $\omega (X^*, X)\times \RR$.

\end{example}
\begin{proof}
Clearly, $f$ and $g$ are proper lower semicontinuous convex.
Let $\varepsilon>0$. Then
by  \cite[Example~3.1]{BurMaj}
\begin{align}\label{Redua:9}
\partial_\varepsilon f(0,0)=\bigcup_{u\geq0} \big(u\times\left[-\sqrt{2\varepsilon u},\sqrt{2\varepsilon u}\right]\big)
\quad\text{and}\quad
\partial_\varepsilon g(0,0)=\left]-\infty,0\right]\times\{0\}.
\end{align}
Thus, $\partial_\varepsilon f(0,0)+\partial_\varepsilon g(0,0)=\RR^2$ and then $\partial_\varepsilon f(0,0)+\partial_\varepsilon g(0,0)$ is closed.  Corollary~\ref{TheodualM:1} implies that $(f+g)^*=f^*\Box g^*$.
\cite[Example, page~2798]{BotWank1} shows that
$(f^*\Box g^*)$ is not exact at $(1,1)$ and $\partial (f+g)(0)\neq\partial f(0)+\partial g(0)$.  By \cite{BurJeya, BotWank1}, $\epi f^*+\epi g^*$ is not closed in the topology $\omega (X^*, X)\times \RR$.
\end{proof}

The following result is classical, we state and prove it here for more
convenient and clear future use.

\begin{lemma}[Hiriart-Urruty]\label{Lemresou}
Let $m\in\NN$, and
 $f_i:X\rightarrow\RX$ be proper convex with $\bigcap_{i=1}^{m}\dom f_i\neq\varnothing$,
 where $i\in\{1,2,\ldots,m\}$.
Assume that $(\sum_{i=1}^m f_i)^*=f_1^*\Box\cdots \Box f_m^*$  in $X^*$ and the infimal convolution is exact (attained) everywhere.
Then \[\partial (f_1+f_2+\cdots+f_m)=\partial f_1+\cdots+\partial f_m.\]
\end{lemma}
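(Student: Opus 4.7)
The plan is to prove the two inclusions separately, with the bulk of the work going into the forward inclusion via a tight Fenchel--Young chain that exploits the exactness hypothesis. The reverse inclusion $\sum_{i=1}^m\partial f_i(x)\subseteq \partial(\sum_{i=1}^m f_i)(x)$ is immediate: adding the $m$ subdifferential inequalities $\langle y-x,x_i^*\rangle+f_i(x)\leq f_i(y)$ across all $y\in X$ yields $\sum_{i=1}^m x_i^*\in\partial(\sum_{i=1}^m f_i)(x)$, and no hypothesis on the $f_i$ is needed.

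For the nontrivial inclusion $\partial(\sum_{i=1}^m f_i)(x)\subseteq \sum_{i=1}^m\partial f_i(x)$, the idea is to turn a single Fenchel--Young equality into a system of $m$ Fenchel--Young equalities using the exact decomposition provided by the infimal convolution. Starting from $x^*\in\partial(\sum_{i=1}^m f_i)(x)$, the Fenchel--Young equality gives
\[
\sum_{i=1}^m f_i(x)+\Bigl(\sum_{i=1}^m f_i\Bigr)^{\!*}(x^*)=\langle x,x^*\rangle.
\]
The hypothesis $(\sum_{i=1}^m f_i)^*=f_1^*\Box\cdots\Box f_m^*$ combined with exactness at $x^*$ furnishes a decomposition $x^*=\sum_{i=1}^m x_i^*$ with $\bigl(\sum_{i=1}^m f_i\bigr)^{\!*}(x^*)=\sum_{i=1}^m f_i^*(x_i^*)$. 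Substituting into the displayed identity and regrouping gives
\[
\sum_{i=1}^m\bigl[f_i(x)+f_i^*(x_i^*)-\langle x,x_i^*\rangle\bigr]=0.
\]

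The key observation is then that each summand is nonnegative by Fenchel--Young, so the equality forces each individual term to vanish: $f_i(x)+f_i^*(x_i^*)=\langle x,x_i^*\rangle$ for every $i$, which is exactly $x_i^*\in\partial f_i(x)$. Therefore $x^*=\sum_{i=1}^m x_i^*\in\sum_{i=1}^m\partial f_i(x)$, completing the inclusion. There is no real obstacle here — the whole argument is mechanical once one notices that the exactness hypothesis is precisely what converts the single Fenchel--Young identity for the sum into $m$ separate ones, which is the exact content of the sum rule.
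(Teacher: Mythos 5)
Your proof is correct and follows essentially the same route as the paper's: Fenchel--Young equality for the sum, the exact decomposition $x^*=\sum_i x_i^*$ from the infimal convolution, and then forcing each nonnegative Fenchel--Young defect to vanish. The only difference is that you make the ``sum of nonnegative terms equals zero'' step explicit, which the paper leaves implicit.
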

\begin{proof}
Let $x\in X$. We always have $\partial (f_1+f_2+\cdots+f_m)(x)\supseteq\partial f_1(x)+\cdots+\partial f_m(x)$.
So it suffices to show that
\begin{align}
\partial (f_1+f_2+\cdots+f_m)(x)\subseteq\partial f_1(x)+\cdots+\partial f_m(x).\label{ExaEpG:3}
\end{align}
Let $w^*\in\partial (f_1+f_2+\cdots+f_m)(x)$. Then
\begin{align*}
(f_1+f_2+\cdots+f_m)(x)+(f_1+f_2+\cdots+f_m)^*(w^*)=
\langle x, w^*\rangle.
\end{align*}
By the assumption, there exists $w^*_i\in X^*$ such that $\sum_{i=1}^m w^*_i=w^*$ and
\begin{align*}
f_1(x)+f_2(x)+\cdots+f_m(x)+f_1^*(w_1^*)+\cdots+f_m^*(w_m^*)=
\langle x, w_1^*+\cdots+w_m^*\rangle.
\end{align*}
Hence
\begin{align*}
w_i^*\in\partial f_i(w_i),\quad\forall i\in\{1,2,\ldots,m\}.
\end{align*}
Thus
\begin{align*}
w^*=\sum_{i=1}^m w^*_i\in\sum_{i=1}^m\partial f_i(w_i),
\end{align*}
and \eqref{ExaEpG:3} holds.
\end{proof}

A less immediate corollary is:

\begin{corollary}\label{DualCorL:1}\emph{(See \cite[Theorem~3.5.8]{BotGradWank}.)}
Let $m\in\NN$, and
 $f_i:X\rightarrow\RX$ be proper convex with $\bigcap_{i=1}^{m}\dom f_i\neq\varnothing$,
 where $i\in\{1,2,\ldots,m\}$. Suppose  that $\overline {f_i}=f_i$  on $\bigcap_{i=1}^{m}\dom\overline {f_i}$.
 Assume that $\epi f^*_1+\cdots+\epi f^*_m$ is closed in
  the topology $\omega (X^*, X)\times \RR$.

   Then
$(\sum_{i=1}^m f_i)^*=f_1^*\Box\cdots \Box f_m^*$  in $X^*$ and the infimal convolution is exact (attained) everywhere.
In consequence, we also have \[\partial (f_1+f_2+\cdots+f_m)=\partial f_1+\cdots+\partial f_m.\]
\end{corollary}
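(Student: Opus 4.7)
My plan is to reduce the corollary to the conjunction of two facts already available in the paper: (i) the closed epigraph condition (CEC) forces both $(\sum f_i)^* = f_1^*\Box\cdots\Box f_m^*$ on $X^*$ and attainment (exactness) of the infimal convolution at every point; and (ii) once equality holds with exactness everywhere, Lemma~\ref{Lemresou} (Hiriart-Urruty) delivers the subdifferential sum formula. Thus the only real work is the first fact, which I will establish by passing to epigraphs and then invoking Theorem~\ref{Theman:1}.

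For the epigraphical step, the starting observation is the inclusion chain
\[
\epi f_1^*+\cdots+\epi f_m^* \;\subseteq\; \epi\bigl(f_1^*\Box\cdots\Box f_m^*\bigr) \;\subseteq\; \overline{\epi f_1^*+\cdots+\epi f_m^*}^{\,\omega(X^*,X)\times\RR}.
\]
The left inclusion is immediate from the definition of infimal convolution. For the right one, if $(x^*,r)\in\epi(f_1^*\Box\cdots\Box f_m^*)$, then for each $\eta>0$ there exist $x_i^*$ with $\sum x_i^*=x^*$ and $\sum f_i^*(x_i^*)\le r+\eta$; picking $r_i:=f_i^*(x_i^*)$ (adjusted upward to handle $+\infty$ if needed) places $(x^*, r+\eta)$ in the sum of epigraphs, and sending $\eta\downarrow 0$ lands $(x^*,r)$ in the closure. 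The CEC hypothesis collapses the three sets to a single one. From this equality I extract two consequences: (a) $f_1^*\Box\cdots\Box f_m^*$ is weak$^*$ lower semicontinuous, since its epigraph is the assumed closed set; and (b) for any $x^*$ with $(f_1^*\Box\cdots\Box f_m^*)(x^*)=:r$ finite, the pair $(x^*,r)$ belongs to $\epi f_1^*+\cdots+\epi f_m^*$, so there is a decomposition $x^*=\sum x_i^*$ with $\sum f_i^*(x_i^*)\le r$, i.e.\ the infimum in the infimal convolution is attained. (When $r=+\infty$ exactness is vacuous.)

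Next, property (a) together with the standing hypothesis $\overline{f_i}=f_i$ on $\bigcap_{i=1}^m\dom\overline{f_i}$ triggers the implication \ref{mainGed:3}$\Rightarrow$\ref{mainGed:2} of Theorem~\ref{Theman:1}, yielding $(\sum_{i=1}^m f_i)^*=f_1^*\Box\cdots\Box f_m^*$ on all of $X^*$. Combined with (b) this is exactly the hypothesis of Lemma~\ref{Lemresou}, which immediately gives $\partial(f_1+\cdots+f_m)=\partial f_1+\cdots+\partial f_m$, completing the proof.

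The only subtle point is the epigraphical approximation used to sandwich $\epi(f_1^*\Box\cdots\Box f_m^*)$ between the sum of epigraphs and its closure; one must take care that the $\eta$-perturbation happens in the $\RR$-factor (with its usual topology), while the $X^*$-coordinate is held fixed, so the limit is indeed legitimate in the $\omega(X^*,X)\times\RR$ topology of the CEC hypothesis. Everything else is bookkeeping on top of results already proved.
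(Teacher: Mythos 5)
Your proof is correct, but it reaches the conclusion by a genuinely different route from the paper. The paper verifies condition \ref{mainGed:1} of Theorem~\ref{Theman:1}: it takes a net $(x^*_{i,\alpha})$ with $x^*_{i,\alpha}\in\partial_\varepsilon f_i(x)$, lifts each term to the point $(x^*_{i,\alpha},-f_i(x)+\langle x,x^*_{i,\alpha}\rangle+\varepsilon)$ of $\epi f_i^*$, uses the closedness of $\epi f_1^*+\cdots+\epi f_m^*$ to pass to the limit, and reads off $x^*\in\sum_i\partial_{m\varepsilon}f_i(x)$; exactness is then proved separately by a minimizing-sequence argument, again closed under the epigraph sum. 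You instead verify condition \ref{mainGed:3} via the sandwich $\sum_i\epi f_i^*\subseteq\epi(f_1^*\Box\cdots\Box f_m^*)\subseteq\overline{\sum_i\epi f_i^*}$, which under the closed epigraph hypothesis collapses to an identity of sets and delivers both the weak$^*$ lower semicontinuity of the infimal convolution and its attainment in one stroke. Your route is cleaner and more conceptual: it isolates the real content of the closed epigraph condition (that the epigraph of the infimal convolution \emph{is} the sum of the epigraphs) and dispenses with the $\varepsilon$-subdifferential net bookkeeping; it also makes the ``only if'' direction of Proposition~\ref{35} essentially self-contained. What the paper's argument buys in exchange is an explicit quantitative form of condition \ref{mainGed:1} (the constant $K=m$), which is the currency in which the rest of the paper trades. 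One small point you leave implicit: at the stage where you claim attainment you treat only finite values and $+\infty$, and exactness would genuinely fail at a point where $f_1^*\Box\cdots\Box f_m^*=-\infty$; this case is excluded only after the identity $(\sum_i f_i)^*=f_1^*\Box\cdots\Box f_m^*$ is in hand, since $\sum_i f_i$ is proper (each $f_i$ is proper and $\bigcap_i\dom f_i\neq\varnothing$), so its conjugate never takes the value $-\infty$. A one-line remark to that effect would close the loop, but it is not a substantive gap.
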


\begin{proof}
Let $x\in \bigcap_{i=1}^{m}\dom f_i$, $x^*\in \overline{\left[\sum_{i=1}^{m}\partial_\varepsilon f_i(x)\right]}^{\wk}$ and $\varepsilon>0$.  We will show that
\begin{align}
x^*\in\sum_{i=1}^{m}\partial_{m\varepsilon} f_i(x).\label{GeDu:L10a}
\end{align}
The assumption on $x^*$ implies that for each $i=1,\ldots,m$ there exists $(x^*_{i,\alpha})_{\alpha\in I }$ in $\partial_\varepsilon f_i(x)$ such that
\begin{align}
\sum_{i=1}^{m}x^*_{i,\alpha}\weakstarly x^*.\label{GeDu:L10}
\end{align}
We have
\begin{align}
 f^*_i(x^*_{i,\alpha})\leq-f_i(x)+\langle x, x^*_{i,\alpha}\rangle+\varepsilon,\quad
\forall i\in\{1,2,\ldots,m\}\quad\forall \alpha\in I.
\end{align}
Thus $(x^*_{i,\alpha}, -f_i(x)+\langle x, x^*_{i,\alpha}\rangle+\varepsilon)\in\epi f_i^*, \forall i$
and hence
\begin{align} \Big(\sum_{i=1}^m x^*_{i,\alpha}, -\sum_{i=1}^m f_i(x)+\langle x, \sum_{i=1}^m x^*_{i,\alpha}\rangle+m\varepsilon\Big)\in\epi f^*_1+\cdots+\epi f^*_m.\label{GeDu:L11}
\end{align}
Now $\epi f^*_1+\cdots+\epi f^*_m$ is closed in the topology $\omega (X^*, X)\times \RR$.
Thus, by \eqref{GeDu:L10} and \eqref{GeDu:L11},
we have
\begin{align}
 \Big(x^*, -\sum_{i=1}^m f_i(x)+\langle x, x^*\rangle+m\varepsilon\Big)\in\epi f^*_1+\cdots+\epi f^*_m.
\end{align}
Consequently, there exists $y^*_i\in X^*$ and $t_i\geq0$ such that
\begin{align}
x^*&=\sum_{i=1}^m y^*_i\label{GeDu:L12}\\
 -\sum_{i=1}^m f_i(x)+\langle x, x^*\rangle+m\varepsilon&=
\sum_{i=1}^m (f^*(y_i^*)+t_i).\nonumber
\end{align}
Hence
\begin{align}
 -\sum_{i=1}^m f_i(x)+\langle x, x^*\rangle+m\varepsilon\geq
\sum_{i=1}^m f^*(y_i^*).
\end{align}
Then we have
\begin{align*}
y_i^*\in\partial_{m\varepsilon} f_i(x),\quad
\forall i\in\{1,2,\ldots,m\}.
\end{align*}
Thus by \eqref{GeDu:L12},
\begin{align*}
x^*\in\sum_{i=1}^m\partial_{m\varepsilon} f_i(x).
\end{align*}
Hence \eqref{GeDu:L10a} holds. Applying Theorem~\ref{Theman:1}, part \ref{mainGed:1} implies \ref{mainGed:2}, we have
\begin{align}(\sum_{i=1}^m f_i)^*=f_1^*\Box\cdots \Box f_m^*.
\label{ExaEpG:1}
\end{align}
Let $z^*\in X^*$. Next we will show that
$(f_1^*\Box\cdots \Box f_m^*)(z^*)$ is achieved.
If $z^*\notin\dom (\sum_{i=1}^m f_i)^*$, then
$(f_1^*\Box\cdots \Box f_m^*)(x^*)=+\infty$ by \eqref{ExaEpG:1}
and hence $(f_1^*\Box\cdots \Box f_m^*)(z^*)$ is achieved.

Now suppose that $z^*\in\dom (\sum_{i=1}^m f_i)^*$  and then
$(\sum_{i=1}^m f_i)^*(z^*)\in\RR$.
By \eqref{ExaEpG:1}, there exists $(z^*_{i,n})_{n\in\NN}$ such that $\sum_{i=1}^m z^*_{i,n}= z^*$
and
\begin{align*}(\sum_{i=1}^m f_i)^*(z^*)\leq
f^*_1(z^*_{1,n})+f^*_2(z^*_{2,n})+\cdots+f^*_m(z^*_{m,n})
\leq(\sum_{i=1}^m f_i)^*(z^*)+\frac{1}{n}.
\end{align*}
Then we have
\begin{align}f^*_1(z^*_{1,n})+f^*_2(z^*_{2,n})+\cdots+f^*_m(z^*_{m,n})
\longrightarrow
(\sum_{i=1}^m f_i)^*(z^*).\label{ExaEpG:2}
\end{align}
Since $\big(z^*, \sum_{i=1}^m f^*_i(z^*_{i,n})\big)=
\big(\sum_{i=1}^m z^*_{i,n}, \sum_{i=1}^m f^*_i(z^*_{i,n})\big)\in
\epi f^*_1+\cdots+\epi f^*_m$ and $\epi f^*_1+\cdots+\epi f^*_m$
is closed
 in
  the topology $\omega (X^*, X)\times \RR$, \eqref{ExaEpG:2} implies that
\begin{align*}\big(z^*, (\sum_{i=1}^m f_i)^*(z^*)\big)\in
\epi f^*_1+\cdots+\epi f^*_m.
\end{align*}
Thus, there exists $v^*_i\in X^*$ such that $\sum_{i=1}^m v^*_i=z^*$ and
\begin{align}
 (\sum_{i=1}^m f_i)^*(z^*)\geq \sum_{i=1}^m f^*_i(v^*_i)\geq (f_1^*\Box\cdots \Box f_m^*)(z^*).
 \label{ExaEpG:3a}
\end{align}
Since $ (\sum_{i=1}^m f_i)^*(z^*)=(f_1^*\Box\cdots \Box f_m^*)(z^*)$ by
\eqref{ExaEpG:1}, it follows from \eqref{ExaEpG:3a} that
$ (\sum_{i=1}^m f_i)^*(z^*)= \sum_{i=1}^m f^*_i(v^*_i)$.
Hence $(f_1^*\Box\cdots \Box f_m^*)(z^*)$ is achieved.

The applying Lemma~\ref{Lemresou}, we have
$\partial (f_1+f_2+\cdots+f_m)=\partial f_1+\cdots+\partial f_m$.
\end{proof}

When there are precisely two functions this reduces to:

\begin{corollary}[Bo\c{t} and Wanka]\emph{(See \cite[Theorem~3.2]{BotWank1}.)}
\label{DualCorL:2}
Let
 $f,g:X\rightarrow\RX$ be proper lower semicontinuous and convex with $\dom f\cap\dom g\neq\varnothing$.
 Assume that $\epi f^*+\epi g^*$ is closed in
  the topology $\omega (X^*, X)\times \RR$. Then
$(f+g)^*=f^*\Box g^*$ in $X^*$ and the infimal convolution is exact everywhere.
In consequence, $\partial (f+g)=\partial f+\partial g$.
\end{corollary}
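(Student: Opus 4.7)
The plan is to deduce this immediately from Corollary~\ref{DualCorL:1} applied in the case $m=2$, taking $f_1=f$ and $f_2=g$. First, I would verify the hypotheses of that corollary. Since $f$ and $g$ are proper lower semicontinuous, we have $\overline{f}=f$ and $\overline{g}=g$ on all of $X$, so in particular on $\dom\overline{f}\cap\dom\overline{g}$, which handles the hull condition. The nonempty intersection of domains is assumed, and the closedness of $\epi f^* + \epi g^*$ in $\omega(X^*,X)\times\RR$ is exactly the closed epigraph hypothesis needed.

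Then Corollary~\ref{DualCorL:1} directly yields both $(f+g)^* = f^*\Box g^*$ on all of $X^*$ and exactness of the infimal convolution everywhere. This is the first part of the conclusion.

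For the subdifferential identity, I would invoke Lemma~\ref{Lemresou} (Hiriart-Urruty), whose hypotheses, namely the conjugate formula together with everywhere-exactness of the infimal convolution, have just been established. The lemma then gives $\partial(f+g) = \partial f + \partial g$, completing the proof.

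Since every step is a direct appeal to an already-proved result, there is essentially no obstacle — the only thing to be careful about is stating clearly that lower semicontinuity of $f$ and $g$ makes the $\overline{f_i}=f_i$ assumption of Corollary~\ref{DualCorL:1} automatic in this setting, so that no extra verification is needed before invoking it.
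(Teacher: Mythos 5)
Your proposal is correct and matches the paper's proof, which is exactly ``Directly apply Corollary~\ref{DualCorL:1}'' with $m=2$; the lower semicontinuity of $f$ and $g$ indeed makes the $\overline{f_i}=f_i$ hypothesis automatic. The only minor redundancy is your separate appeal to Lemma~\ref{Lemresou}, since Corollary~\ref{DualCorL:1} already asserts the subdifferential sum formula as part of its conclusion.
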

\begin{proof}
Directly apply Corollary~\ref{DualCorL:1}.
\end{proof}
\begin{remark}
In the setting of Banach space,  Corollary~\ref{DualCorL:2}
was first established by Burachik and Jeyakumar \cite{BurJeya}.
Example~\ref{ExamDul:1} shows that the equality $(f+g)^*=f^*\Box g^*$ is not a sufficient condition for $\epi f^*+\epi
g^*$ to be closed.\end{remark}

The following result, stating the equivalence between the closed epigraph condition
and condition (ii) in Theorem \ref{Theman:1} {\em with
exactness}, is well known but hard to track down.

\begin{proposition}\label{35}
  Let $m\in\NN$, and $f_i:X\rightarrow\RX$ be proper lower
  semicontinuous and convex with $\bigcap_{i=1}^{m}\dom
  f_i\neq\varnothing$, where $i\in\{1,2,\ldots,m\}$.  Then $\epi
  f^*_1+\cdots+\epi f^*_m$ is closed in the topology $\omega (X^*,
  X)\times \RR$ if and only if $(\sum_{i=1}^m f_i)^*=f_1^*\Box\cdots
  \Box f_m^*$ in $X^*$ and the infimal convolution is exact.
\end{proposition}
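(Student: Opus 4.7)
The plan is to handle the two implications separately, noting that the forward direction is essentially Corollary~\ref{DualCorL:1} so the real content is the reverse direction.

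For the implication $(\Rightarrow)$, the hypothesis is exactly that of Corollary~\ref{DualCorL:1} (the lower semicontinuity hypothesis $\overline{f_i}=f_i$ on $\bigcap\dom\overline{f_i}$ is automatic here since the $f_i$ are already lower semicontinuous). That corollary delivers both $\left(\sum_{i=1}^m f_i\right)^* = f_1^*\Box\cdots\Box f_m^*$ and exactness everywhere, so I would simply cite it.

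For the implication $(\Leftarrow)$, the main observation is the elementary identity on epigraphs: for any functions $h_1,\ldots,h_m$ on $X^*$,
\begin{align*}
\epi h_1+\cdots+\epi h_m \subseteq \epi\bigl(h_1\Box\cdots\Box h_m\bigr),
\end{align*}
with equality whenever the infimal convolution is exact at every point where it is finite. First I would verify the inclusion: if $(z^*,r)$ belongs to the sum, write $z^*=\sum z_i^*$ and $r=\sum r_i$ with $h_i(z_i^*)\leq r_i$, so $(h_1\Box\cdots\Box h_m)(z^*)\leq \sum h_i(z_i^*)\leq r$. For the reverse inclusion under exactness, take $(z^*,r)\in \epi(h_1\Box\cdots\Box h_m)$ with $(h_1\Box\cdots\Box h_m)(z^*)\in\RR$: by exactness choose $z_i^*$ with $\sum z_i^* = z^*$ attaining the infimum, then set $r_i := h_i(z_i^*)$ for $i\geq 2$ and $r_1 := r - \sum_{i\geq 2} h_i(z_i^*)\geq h_1(z_1^*)$ to exhibit $(z^*,r)$ as a sum of elements of the $\epi h_i$.

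Applying this with $h_i := f_i^*$, and using the hypothesis that $f_1^*\Box\cdots\Box f_m^*$ is exact everywhere, I obtain
\begin{align*}
\epi f_1^*+\cdots+\epi f_m^* \;=\; \epi\bigl(f_1^*\Box\cdots\Box f_m^*\bigr).
\end{align*}
The assumed identity $\left(\sum_{i=1}^m f_i\right)^* = f_1^*\Box\cdots\Box f_m^*$ then turns the right-hand side into $\epi\left(\sum_{i=1}^m f_i\right)^*$. Since $\sum_{i=1}^m f_i$ is a proper function (its domain is $\bigcap\dom f_i\neq\varnothing$), its Fenchel conjugate is weak$^*$ lower semicontinuous, so its epigraph is weak$^*$ closed in the topology $\omega(X^*,X)\times\RR$, giving the desired closedness of $\epi f_1^*+\cdots+\epi f_m^*$.

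The only mildly delicate point is making sure the exactness clause covers all relevant $z^*$; but wherever $(h_1\Box\cdots\Box h_m)(z^*)=+\infty$ the epigraph contains no point at $z^*$ and there is nothing to check, while $-\infty$ is excluded by properness of $\left(\sum f_i\right)^*$. Thus no genuine obstacle arises, and the proof is essentially a two-line epigraphic manipulation once Corollary~\ref{DualCorL:1} is invoked for the forward direction.
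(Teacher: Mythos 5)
Your proposal is correct. The forward implication is handled exactly as in the paper, by citing Corollary~\ref{DualCorL:1}. For the reverse implication you take a cleaner, more structural route: the paper argues with nets, picking $(w^*,r)$ in the $\omega(X^*,X)\times\RR$ closure of $\epi f^*_1+\cdots+\epi f^*_m$, using the weak$^*$ lower semicontinuity of $f_1^*\Box\cdots\Box f_m^*$ (inherited from the assumed equality with $(\sum_{i=1}^m f_i)^*$) to pass to the limit and get $\bigl(f_1^*\Box\cdots\Box f_m^*\bigr)(w^*)\leq r$, and then invoking exactness to split $w^*$ and land back in the sum of epigraphs. You instead establish the set identity $\epi f_1^*+\cdots+\epi f_m^*=\epi\bigl(f_1^*\Box\cdots\Box f_m^*\bigr)$ --- the inclusion $\subseteq$ being unconditional, the reverse following from exactness together with the fact that each $f_i^*$ never takes the value $-\infty$ (so the attaining decomposition really lives in the epigraphs) --- and then the assumed conjugacy formula turns the right-hand side into $\epi\bigl(\sum_{i=1}^m f_i\bigr)^*$, which is weak$^*$ closed because every Fenchel conjugate is weak$^*$ lower semicontinuous. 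The ingredients (lower semicontinuity of the conjugate plus exactness) are the same in both arguments, so they are logically equivalent, but yours avoids nets entirely and yields the slightly stronger conclusion that the sum of epigraphs \emph{equals} a conjugate epigraph rather than merely being closed; your treatment of the edge cases ($+\infty$ contributes nothing to the epigraph, $-\infty$ is excluded by properness of $(\sum_{i=1}^m f_i)^*$) is adequate.
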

\begin{proof}
$\Rightarrow$:  This follows directly from Corollary~3.10.

$\Leftarrow$: Assume now that $(\sum_{i=1}^m f_i)^*=f_1^*\Box\cdots
\Box f_m^*$ in $X^*$ and the infimal convolution is always exact. Note
that this assumption implies that the function $f_1^*\Box\cdots \Box
f_m^*$ is lower semicontinuous in $X^*$. Let $(w^*, r)\in
X^*\times\RR$ be in the closure of $\epi f^*_1+\cdots+\epi f^*_m$ in
the topology $\omega (X^*, X)\times \RR$.  We will show that $(w^*,
r)\in\epi f^*_1+\cdots+\epi f^*_m$. The assumption on $(w^*, r)$
implies that there exist $(x^*_{i,\alpha})_{\alpha\in I }$ in $\dom
f^*_i$ and $(r_{i,\alpha})_{\alpha\in I }$ in $\RR$ such that
\begin{align}
  w^*_{\alpha}:=\sum_{i=1}^{m}x^*_{i,\alpha}\weakstarly w^*,\quad
 f^*_i(x^*_{i,\alpha}) \le r_{i,\alpha}\,,\,\forall \, i,\,\alpha \quad\text{and} \quad
  \sum_{i=1}^{m}r_{i,\alpha} \longrightarrow
  r.\label{Epig:ade1}
\end{align}
Then
\begin{align}
\Big(f_1^*\Box\cdots \Box f_m^*\Big)(w^*_{\alpha})\leq \sum_{i=1}^{m}
f^*_i(x^*_{i,\alpha})\le \sum_{i=1}^{m}r_{i,\alpha}.\label{Epig:ade2}
\end{align}
Our assumption implies that $f_1^*\Box\cdots \Box f_m^*$ is
lower semicontinuous, hence by taking limits in \eqref{Epig:ade2} and
using \eqref{Epig:ade1} we obtain
\begin{align}
\Big(f_1^*\Box\cdots \Box f_m^*\Big)(w^*) \leq r.\label{Epig:ade3}
\end{align}
By assumption, $\Big(f_1^*\Box\cdots \Box f_m^*\Big)(w^*)$ is
exact. Therefore there exists $w^*_i$ such that
$w^*=\sum_{i=1}^mw^*_i$ and $\Big(f_1^*\Box\cdots \Box
f_m^*\Big)(w^*)=\sum_{i=1}^m f_i^*(w^*_i)$.  The latter fact
and \eqref{Epig:ade3} show that $(w^*,r)\in\epi f^*_1+\cdots+\epi
f^*_m$.
\end{proof}

We next dualize Corollary~\ref{DualCorL:1}.

\begin{corollary}[Dual conjugacy]\label{DualCorL:DU1} Suppose that $X$ is a reflexive Banach space.
Let $m\in\NN$, and
 $f_i:X\rightarrow\RX$ be proper lower semicontinuous and convex with $\bigcap_{i=1}^{m}\dom f^*_i\neq\varnothing$,
 where $i\in\{1,2,\ldots,m\}$.
 Assume that $\epi f_i+\cdots+\epi f_m$ is closed in
  the weak topology $\omega (X, X^*)\times \RR$.

   Then
$(\sum_{i=1}^m f^*_i)^*=f_1\Box\cdots \Box f_m$  in $X$ and the infimal convolution is exact (attained) everywhere.
In consequence, we also have \[\partial (f^*_1+f^*_2+\cdots+f^*_m)=\partial f^*_1+\cdots+\partial f^*_m.\]
\end{corollary}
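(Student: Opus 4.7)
The strategy is to apply Corollary~\ref{DualCorL:1} with the roles of primal and dual spaces interchanged. Because $X$ is reflexive, $X^{**}=X$ canonically, and under this identification the weak$^*$ topology $\omega(X^{**},X^*)$ on $X^{**}$ coincides with the weak topology $\omega(X,X^*)$ on $X$. The plan is to set $g_i:=f_i^*\colon X^*\to\RX$ and apply Corollary~\ref{DualCorL:1} to the family $\{g_i\}_{i=1}^m$, viewed as functions on the primal space $X^*$ whose continuous dual is $X$.

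First I would check the hypotheses. Each $g_i=f_i^*$ is proper lower semicontinuous (w.r.t.\ the strong topology, hence also w.r.t.\ $\omega(X^*,X)$) and convex because $f_i$ is proper convex lsc. The assumption $\bigcap_{i=1}^m\dom f_i^*\neq\varnothing$ gives $\bigcap_{i=1}^m\dom g_i\neq\varnothing$. Since each $g_i$ is already lower semicontinuous, the condition $\overline{g_i}=g_i$ on $\bigcap_i\dom\overline{g_i}$ is automatic. Finally, by the Fenchel--Moreau theorem, $g_i^*=f_i^{**}=f_i$ in $X^{**}=X$, so the hypothesis in Corollary~\ref{DualCorL:1} that $\epi g_1^*+\cdots+\epi g_m^*$ be closed in $\omega(X^{**},X^*)\times\RR$ is, via the reflexive identification, precisely the assumed closedness of $\epi f_1+\cdots+\epi f_m$ in $\omega(X,X^*)\times\RR$.

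Corollary~\ref{DualCorL:1} then yields
\[
\Big(\sum_{i=1}^m g_i\Big)^*=g_1^*\Box\cdots\Box g_m^*\quad\text{in }X^{**},
\]
with the infimal convolution exact everywhere. Rewriting in the original notation, this is
\[
\Big(\sum_{i=1}^m f_i^*\Big)^*=f_1\Box\cdots\Box f_m\quad\text{in }X,
\]
with exact infimal convolution everywhere on $X$.

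For the subdifferential identity, I would simply invoke Lemma~\ref{Lemresou} applied to the family $\{f_i^*\}_{i=1}^m$, whose hypotheses are exactly what we have just established. This gives $\partial(f_1^*+\cdots+f_m^*)=\partial f_1^*+\cdots+\partial f_m^*$. The only nontrivial point in the whole argument is the topological identification given by reflexivity; once that is made, the result is a direct transcription of Corollary~\ref{DualCorL:1}.
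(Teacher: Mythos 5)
Your proposal is correct and is exactly the paper's argument: the paper's entire proof reads ``Apply Corollary~\ref{DualCorL:1} to the functions $f_i^*$.'' You have merely spelled out the hypothesis-checking (properness and lower semicontinuity of the conjugates, the Fenchel--Moreau identity $f_i^{**}=f_i$, and the identification of $\omega(X^{**},X^*)$ with $\omega(X,X^*)$ via reflexivity) that the paper leaves implicit.
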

\begin{proof}
Apply  Corollary~\ref{DualCorL:1} to the functions $f^*_i$.
\end{proof}

In a Banach space we can add a general interiority condition for closure.

\begin{remark}[Transversality]
Suppose that $X$ is a Banach space,
 and let $f, g$ be defined as in Corollary~\ref{DualCorL:2}.
  If $\bigcup_{\lambda>0}\lambda\left[\dom f-\dom g\right]$
is a closed subspace, then the Attouch-Brezis theorem implies that
$\epi f^*+\epi g^*$ is closed in the topology $\omega (X^*, X)\times \RR$ \cite{AtBrezis,RodSim,BotWank1,BurJeya}. This result works also in a locally convex Fr\'echet space \cite{Bor}.
\end{remark}

The following result shows that sublinearity  rules out the pathology of Example~\ref{ExamDul:1} in Theorem~\ref{Theman:1}\ref{mainGed:1}.

\begin{corollary}[Sublinear functions]\label{sublcSE:4}
Let $m\in\NN$, and
 $f_i:X\rightarrow\RX$ be proper sublinear,
 where $i\in\{1,2,\ldots,m\}$. Suppose  that $\overline {f_i}=f_i$  on $\bigcap_{i=1}^{m}\dom\overline {f_i}$.
 Then the following eight conditions are equivalent.
 \begin{enumerate}
 \item \label{mainGsu:a1} There exists $K>0$ such that for every $x\in \bigcap_{i=1}^{m}\dom f_i$, and every $\varepsilon>0$,
\begin{align*}
\overline{\left[\sum_{i=1}^{m}\partial_\varepsilon f_i(x)\right]}^{\wk}\subseteq\sum_{i=1}^{m}\partial_{K\varepsilon} f_i(x).
\end{align*}
 \item \label{mainGsu:1}
$\sum_{i=1}^m\partial f_i(0)$ is weak$^*$ closed.

\item \label{mainGsu:2}$\left(\sum_{i=1}^m f_i\right)^*=f_1^*\Box\cdots \Box f_m^*$
in $X^*$.

\item \label{mainGsu:3} $f_1^*\Box\cdots \Box f_m^*$ is weak$^*$ lower semicontinuous.

\item \label{mainGsu:4a} For every $x\in X$ and $
 \varepsilon\geq0$,
  \begin{align*}
 \partial_{\varepsilon}(f_1+\cdots+f_m)(x)= \bigcap_{\eta>0} \left[ \bigcup_{\varepsilon_i\geq0, \, \sum_{i=1}^m\varepsilon_i=\varepsilon+\eta}
 \Big(\partial_{\varepsilon_1} f_1(x)+\cdots+\partial_{\varepsilon_m} f_m(x)\Big)\right].
 \end{align*}

\item \label{mainGsu:4} $\epi f^*_1+\cdots+\epi f^*_m$ is closed in
  the topology $\omega (X^*, X)\times \RR$.

\item \label{mainGsu:5}
$(\sum_{i=1}^m f_i)^*=f_1^*\Box\cdots \Box f_m^*$  in $X^*$ and the infimal convolution is exact (attained) everywhere it is finite.

\item \label{mainGsu:6} \[\partial (f_1+f_2+\cdots+f_m)=\partial f_1+\cdots+\partial f_m.\]

 \end{enumerate}
\end{corollary}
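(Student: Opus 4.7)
The plan is to exploit a key structural identity: for a proper sublinear function $f$, the conjugate $f^*$ takes values in $\{0,+\infty\}$ and equals the indicator of $\menge{x^* \in X^*}{\langle y, x^* \rangle \leq f(y),\, \forall y \in X}=\partial f(0)$, a weak$^*$ closed convex set. This follows from $f^*(x^*) = \sup_x(\langle x, x^* \rangle - f(x))$ together with positive homogeneity: if $\langle x_0, x^* \rangle - f(x_0) > 0$ for some $x_0\in\dom f$, scaling $x_0$ by $t\to+\infty$ forces $f^*(x^*)=+\infty$; otherwise $x=0$ gives $f^*(x^*)=0$.

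Two structural consequences follow. First, $\epi f_i^* = \partial f_i(0) \times \RP$, so
\[
\sum_{i=1}^m \epi f_i^* = \Big(\sum_{i=1}^m \partial f_i(0)\Big) \times \RP,
\]
which is closed in $\omega(X^*,X)\times\RR$ if and only if $\sum_{i=1}^m \partial f_i(0)$ is weak$^*$ closed, giving $\ref{mainGsu:1}\Leftrightarrow\ref{mainGsu:4}$. Second, $f_1^* \Box \cdots \Box f_m^* = \iota_{\sum_{i=1}^m \partial f_i(0)}$, and any decomposition of a point in this sum attains the infimum, so the convolution is automatically exact wherever it is finite. In particular the convolution is weak$^*$ lower semicontinuous iff $\sum_{i=1}^m\partial f_i(0)$ is weak$^*$ closed, which is $\ref{mainGsu:1}\Leftrightarrow\ref{mainGsu:3}$.

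With these preparations I assemble the chain of equivalences as follows. Theorem~\ref{Theman:1} applied to the $f_i$ yields $\ref{mainGsu:a1} \Leftrightarrow \ref{mainGsu:2} \Leftrightarrow \ref{mainGsu:3} \Leftrightarrow \ref{mainGsu:4a}$, and the observations above insert $\ref{mainGsu:1}$ and $\ref{mainGsu:4}$ into this block. Automatic exactness of the convolution gives $\ref{mainGsu:2} \Leftrightarrow \ref{mainGsu:5}$. The implication $\ref{mainGsu:5} \Rightarrow \ref{mainGsu:6}$ is exactly Lemma~\ref{Lemresou}. Finally, for $\ref{mainGsu:6} \Rightarrow \ref{mainGsu:1}$, evaluate the sum rule at $x=0$ to obtain $\partial(f_1+\cdots+f_m)(0) = \sum_{i=1}^m \partial f_i(0)$; the left-hand side is weak$^*$ closed as a subdifferential of a proper convex function, hence so is the right-hand side.

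The argument is essentially algebraic bookkeeping once the identity $f^* = \iota_{\partial f(0)}$ and Theorem~\ref{Theman:1} are in hand, so there is no substantial obstacle. The only minor point is to confirm that the hypothesis $\overline{f_i}=f_i$ on $\bigcap_{i=1}^m \dom \overline{f_i}$, needed to invoke Theorem~\ref{Theman:1}, does not interfere with the structural identity: this is automatic since $0 \in \bigcap_i \dom f_i$ (because $f_i(0)=0$) and $f_i^*=(\overline{f_i})^*$ in any case.
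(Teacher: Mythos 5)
Your proof is correct, and it assembles the equivalences along a somewhat different route than the paper does, although both rest on the same structural fact that $f^*=\iota_{\partial f(0)}$ for proper sublinear $f$ (this is \cite[Theorem~2.4.14]{Zalinescu}, which the paper cites repeatedly). The paper proves \ref{mainGsu:a1}$\Leftrightarrow$\ref{mainGsu:1} directly by a net argument (extract $x^*_{i,\alpha}\in\partial_\varepsilon f_i(x)\subseteq\partial f_i(0)$, pass to the weak$^*$ limit in the closed set $\sum_i\partial f_i(0)$, then redistribute the errors to land in $\sum_i\partial_{m\varepsilon}f_i(x)$, yielding the explicit constant $K=m$), and it obtains \ref{mainGsu:4}$\Rightarrow$\ref{mainGsu:5} by invoking the closed-epigraph machinery of Corollary~\ref{DualCorL:1}. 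You replace both of these steps with the single observation that $f_1^*\Box\cdots\Box f_m^*=\iota_{\sum_{i=1}^m\partial f_i(0)}$: this gives \ref{mainGsu:1}$\Leftrightarrow$\ref{mainGsu:3} immediately (an indicator is weak$^*$ lsc iff its set is weak$^*$ closed) and makes exactness-where-finite automatic, so \ref{mainGsu:2}$\Leftrightarrow$\ref{mainGsu:5} is free; the remaining links (Theorem~\ref{Theman:1} for the block \ref{mainGsu:a1}--\ref{mainGsu:4a}, the identity $\epi f_i^*=\partial f_i(0)\times\RP$ for \ref{mainGsu:1}$\Leftrightarrow$\ref{mainGsu:4}, Lemma~\ref{Lemresou} for \ref{mainGsu:5}$\Rightarrow$\ref{mainGsu:6}, and evaluation of the sum rule at $0$ for \ref{mainGsu:6}$\Rightarrow$\ref{mainGsu:1}) coincide with the paper's. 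Your assembly is more economical in that it avoids the net/subnet bookkeeping and the appeal to Corollary~\ref{DualCorL:1}; what it gives up is only the sharper constant $K=m$ in \ref{mainGsu:a1} (routing through Theorem~\ref{Theman:1} yields $K=m+1$), which is immaterial to the statement.
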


\begin{proof}
We first show that \ref{mainGsu:a1}$\Leftrightarrow$\ref{mainGsu:1}$\Leftrightarrow$\ref{mainGsu:2}$\Leftrightarrow$\ref{mainGsu:3}$\Leftrightarrow$\ref{mainGsu:4a}.
By Theorem~\ref{Theman:1}, it suffices to show that \ref{mainGsu:a1}$\Leftrightarrow$\ref{mainGsu:1}.

\ref{mainGsu:a1}$\Rightarrow$\ref{mainGsu:1}:
Let $x^*\in\overline{\left[\sum_{i=1}^{m}\partial f_i(0)\right]}^{\wk}$.
Then $x^*\in\overline{\left[\sum_{i=1}^{m}\partial_1 f_i(0)\right]}^{\wk}$.
By \ref{mainGsu:a1}, there exists $K>0$ such that
$x^*\in\sum_{i=1}^{m}\partial_K f_i(0)$. \cite[Theorem~2.4.14(iii)]{Zalinescu} shows that
$x^*\in\sum_{i=1}^{m}\partial f_i(0)$. Hence
$\sum_{i=1}^m\partial f_i(0)$ is weak$^*$ closed.

\ref{mainGsu:1}$\Rightarrow$\ref{mainGsu:a1}:
Let $x\in \bigcap_{i=1}^{m}\dom f_i$ and $\varepsilon>0$, and
$x^*\in\overline{\left[\sum_{i=1}^{m}\partial_\varepsilon f_i(x)\right]}^{\wk}$.
Then there exists a net $(x^*_{i,\alpha})_{\alpha\in I}$ in $\partial_\varepsilon f_i(x)$ such that
\begin{align}
\sum_{i=1}^{m}x^*_{i,\alpha}\weakstarly x^*.\label{GeDu:sul5}
\end{align}
Then by \cite[Theorem~2.4.14(iii)]{Zalinescu}, we have
\begin{align}x^*_{i,\alpha}\in\partial f_i(0)\quad\text{and}\quad
f_i(x)\leq\langle x, x^*_{i,\alpha}\rangle+\varepsilon,\quad
\forall i\in\{1,2,\ldots,m\}\quad\forall \alpha\in I.
\end{align}
Hence
\begin{align}\sum_{i=1}^mx^*_{i,\alpha}\in\sum_{i=1}^m\partial f_i(0)\quad\text{and}\quad
\sum_{i=1}^mf_i(x)\leq\langle x, \sum_{i=1}^mx^*_{i,\alpha}\rangle+m\varepsilon,\quad
\forall \alpha\in I.\label{GeDu:sul6}
\end{align}
Thus, by \eqref{GeDu:sul5} and \eqref{GeDu:sul6},
\begin{align}x^*\in\overline{\left[\sum_{i=1}^m\partial f_i(0)\right]}^{\wk}\quad\text{and}\quad
\sum_{i=1}^mf_i(x)\leq\langle x, x^*\rangle+m\varepsilon.\label{GeDu:sul7}
\end{align}
Since $\sum_{i=1}^m\partial f_i(0)$ is weak$^*$ closed, by \eqref{GeDu:sul7}, $x^*\in
\sum_{i=1}^m\partial f_i(0)$. Then there exists $y^*_i\in \partial f_i(0)$
such that
\begin{align}
x^*=\sum_{i=1}^m y^*_i.\label{GeDu:sul8}
\end{align}
By \eqref{GeDu:sul7} and \cite[Theorem~2.4.14(i)]{Zalinescu}, we have
\begin{align*}
\sum_{i=1}^m\big(f_i(x)+f_i^*(y^*_i)\big)=
\sum_{i=1}^m\big(f_i(x)+\iota_{\partial f_i(0)}(y^*_i)\big)\leq\langle x, x^*\rangle+m\varepsilon
\end{align*}
Hence \begin{align*}
y^*_i\in\partial_{m\varepsilon} f_i(x),\quad\forall i\in\{1,2,\ldots,m\}.
\end{align*}
Then by \eqref{GeDu:sul8}, $x^*\in\sum_{i=1}^m \partial_{m\varepsilon} f_i(x)$.
Setting $K:=m$, we obtain \ref{mainGsu:a1}.

Hence \ref{mainGsu:a1}$\Leftrightarrow$\ref{mainGsu:1}$\Leftrightarrow$\ref{mainGsu:2}
$\Leftrightarrow$\ref{mainGsu:3}$\Leftrightarrow$\ref{mainGsu:4a}.

\ref{mainGsu:1}$\Leftrightarrow$\ref{mainGsu:4}:
 By \cite[Theorem~2.4.14(i)]{Zalinescu},
 we have
 \begin{align*}
 \epi f^*_1+\cdots+\epi f^*_m=\Big(\partial f_1(0)+\cdots+\partial f_m(0)\Big)\times \{r\mid r\geq0\}.
 \end{align*}
The rest is now clear.

\ref{mainGsu:4}$\Rightarrow$\ref{mainGsu:5}: Apply  Corollary~\ref{DualCorL:1}.

\ref{mainGsu:5}$\Rightarrow$\ref{mainGsu:6}:
Apply Lemma~\ref{Lemresou} directly.

\ref{mainGsu:6}$\Rightarrow$\ref{mainGsu:1}: Since $\sum_{i=1}^m\partial f_i(0)
=\partial (f_1+f_2+\cdots+f_m)(0)$, we conclude that $\sum_{i=1}^m\partial f_i(0)$ is weak$^*$ closed
\end{proof}

\begin{remark}\rm
  By applying Corollary \ref{sublcSE:4} to a single sublinear
  function, we conclude that $f=\overline{f}$ and is lower
  semicontinuous everywhere (see \eqref{Redua:L3d}). By \cite[Theorem
  2.4.14]{Zalinescu}, this implies existence of subdifferentials at
  $0$ (as indeed can also be deduced from Corollary \ref{sublcSE:4}).
\end{remark}

\begin{corollary}[Burachik, Jeyakumar and Wu]\emph{(See \cite[Corollary~3.3]{BurJeyaWu}.)}
Suppose that $X$ is a Banach space. Let $f,g:X\rightarrow\RX$ be proper lower semicontinuous and sublinear.
Then the following are equivalent.
\begin{enumerate}

\item  $\epi f^*+\epi g^*$ is closed in
  the topology $\omega (X^*, X)\times \RR$.

\item
$(f+g)^*=f^*\Box g^*$  in $X^*$ and the infimal convolution is exact (attained) everywhere.

\item $\partial (f+g)=\partial f+\partial g$.

 \end{enumerate}
\end{corollary}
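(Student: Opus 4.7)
The plan is to read this off as the special case $m=2$, $f_1 := f$, $f_2 := g$ of the sublinear master corollary (Corollary~\ref{sublcSE:4}). First I would check that the hypotheses of that corollary are satisfied. Since $f$ and $g$ are proper and sublinear, we have $f(0)=g(0)=0$, so $0 \in \dom f \cap \dom g$ and the intersection of domains is nonempty. Since $f$ and $g$ are already lower semicontinuous, $\overline{f}=f$ and $\overline{g}=g$ on all of $X$, which trivially implies the weaker hypothesis $\overline{f_i}=f_i$ on $\bigcap_{i=1}^{2}\dom\overline{f_i}$.

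With these hypotheses in place, Corollary~\ref{sublcSE:4} applies. The three conditions (i), (ii), (iii) in the statement are literally items~\ref{mainGsu:4}, \ref{mainGsu:5} and~\ref{mainGsu:6} of Corollary~\ref{sublcSE:4}, respectively, so their equivalence is immediate from that corollary.

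There is really no obstacle here; the work has all been done in Corollary~\ref{sublcSE:4}. The one point worth noting is that the Banach space hypothesis is not actually required for the equivalence of (i)--(iii); a separated locally convex space suffices (this is inherited from Corollary~\ref{sublcSE:4}, which is stated in that generality). The Banach space setting is simply the framework used in the original reference \cite{BurJeyaWu}. Thus the proof reduces to a single invocation.

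\begin{proof}
Since $f$ and $g$ are proper, lower semicontinuous and sublinear, we have $f(0)=g(0)=0$, so $0 \in \dom f \cap \dom g \neq \varnothing$, and $\overline{f}=f$, $\overline{g}=g$ on $X$. The hypotheses of Corollary~\ref{sublcSE:4} are therefore satisfied with $m=2$, $f_1=f$ and $f_2=g$. The conditions (i), (ii), (iii) of the present statement coincide with conditions~\ref{mainGsu:4}, \ref{mainGsu:5} and~\ref{mainGsu:6} of Corollary~\ref{sublcSE:4}, and their equivalence follows at once.
\end{proof}
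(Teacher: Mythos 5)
Your proposal is correct and is essentially identical to the paper's proof, which reads in full ``Apply Corollary~\ref{sublcSE:4} directly''; your hypothesis checks ($0\in\dom f\cap\dom g$ from sublinearity, $\overline{f}=f$ and $\overline{g}=g$ from lower semicontinuity) are exactly what that invocation needs. The only hair worth splitting is that item~\ref{mainGsu:5} of Corollary~\ref{sublcSE:4} asserts exactness ``everywhere it is finite'' rather than ``everywhere'' as in (ii), but this is harmless here since $f^*=\iota_{\partial f(0)}$ and $g^*=\iota_{\partial g(0)}$, so $f^*\Box g^*$ never takes the value $-\infty$ and is trivially attained wherever it equals $+\infty$.
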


\begin{proof}
Apply Corollary~\ref{sublcSE:4} directly.
\end{proof}

We end this section with a corollary of our main result involving the subdifferential of the sum of convex functions. We recall that a formula known to hold in general, without any constraint qualification, has been given by Hiriart-Urruty and Phelps in \cite[Theorem 2.1]{HUP} (see also \cite[Corollary 5.1]{FS} and
\cite[Theorem 3.1]{HUMSV}) and is as follows.
\begin{equation}\label{HU-P}
 \partial(f_1+\cdots+f_m)(x)= \bigcap_{\eta>0} \overline{\left[ \partial_{\eta} f_1(x)+\cdots+\partial_{\eta} f_m(x)\right]}^{\wk}.
\end{equation}
Several constraint qualifications have been given in the literature to obtain simpler expressions for the right hand side in \eqref{HU-P}. As we mentioned before, the closed epigraph condition allows one to conclude the subdifferential sum formula, so both the intersection symbol and the closure operator become superfluous under this constraint qualification. Hence it is valid to ask whether our closedness condition in Theorem \ref{Theman:1}(i) allows us to simplify the right hand side in \eqref{HU-P}.
The following corollary shows that this is indeed the case, and we are able to remove the weak$^*$ closure from \eqref{HU-P}.

\begin{corollary}\label{CorsubDe:1}
Let $m\in\NN$, and $f_i:X\rightarrow\RX$ be proper convex with $\bigcap_{i=1}^{m}\dom f_i\neq\varnothing$,
 where $i\in\{1,2,\ldots,m\}$. Suppose  that $\overline {f_i}=f_i$  on $\bigcap_{i=1}^{m}\dom\overline {f_i}$.
Assuming any of the assumptions (i)-(iv) in Theorem~\ref{Theman:1}, the following equality holds for every $x\in X$,
  \begin{align*}
 \partial(f_1+\cdots+f_m)(x)= \bigcap_{\eta>0} \left[ \partial_{\eta} f_1(x)+\cdots+\partial_{\eta} f_m(x)\right].
 \end{align*}
\end{corollary}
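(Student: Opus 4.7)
The plan is to bypass the Hiriart--Urruty--Phelps formula \eqref{HU-P} entirely and work directly from Theorem~\ref{Theman:1}\ref{mainGed:4}, which already rewrites $\partial_{\varepsilon}(\sum_i f_i)(x)$ as an intersection of \emph{honest} (non-closed) unions of sums of $\varepsilon_i$-subdifferentials whose parameters sum to $\varepsilon+\eta$. Setting $\varepsilon=0$ gives precisely the flavour of expression we want, and only a monotonicity observation separates it from the form claimed in the corollary. Since conditions (i)--(iv) are equivalent, we may freely invoke (iv).

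First, I would dispose of the trivial case. If $x\notin\bigcap_{i=1}^m\dom f_i$, then $(\sum_i f_i)(x)=+\infty$ so $\partial(\sum_i f_i)(x)=\emp$; and $\partial_\eta f_j(x)=\emp$ for at least one index $j$, so the right-hand side is also empty. This reduces the problem to $x\in\bigcap_i\dom f_i$, where I would establish both inclusions separately.

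For $\subseteq$, I would apply Theorem~\ref{Theman:1}\ref{mainGed:4} with $\varepsilon=0$ to get
\[
\partial(f_1+\cdots+f_m)(x)=\bigcap_{\eta>0}\bigcup_{\substack{\varepsilon_i\geq 0\\ \sum_{i}\varepsilon_i=\eta}}\Bigl(\partial_{\varepsilon_1}f_1(x)+\cdots+\partial_{\varepsilon_m}f_m(x)\Bigr).
\]
For any admissible tuple $(\varepsilon_1,\dots,\varepsilon_m)$ we have $\varepsilon_i\leq\eta$, hence by monotonicity of the $\varepsilon$-subdifferential in $\varepsilon$, $\partial_{\varepsilon_i}f_i(x)\subseteq\partial_\eta f_i(x)$. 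Consequently each inner union is contained in $\sum_i\partial_\eta f_i(x)$, and intersecting over $\eta>0$ yields the desired inclusion. For $\supseteq$, suppose $x^*\in\bigcap_{\eta>0}\sum_i\partial_\eta f_i(x)$ and, for each $\eta>0$, decompose $x^*=\sum_i y_i^*(\eta)$ with $y_i^*(\eta)\in\partial_\eta f_i(x)$. Summing the defining inequalities gives $\sum_i f_i(y)\geq\sum_i f_i(x)+\scal{y-x}{x^*}-m\eta$ for every $y\in X$, so $x^*\in\partial_{m\eta}(\sum_i f_i)(x)$; letting $\eta\downarrow 0$ places $x^*$ in $\partial(\sum_i f_i)(x)$.

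No serious obstacle is expected: the only delicate point is the $\subseteq$ direction, and its entire content is the bound $\varepsilon_i\leq\eta$ combined with Theorem~\ref{Theman:1}\ref{mainGed:4}. The conceptual payoff worth stressing is that condition \eqref{LiangjinCQ} is exactly strong enough to strip the weak$^*$ closure from \eqref{HU-P}, while remaining weaker than the closed epigraph condition, which strips both the closure and the intersection.
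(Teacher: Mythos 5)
Your proposal is correct and follows essentially the same route as the paper: both apply Theorem~\ref{Theman:1}\ref{mainGed:4} with $\varepsilon=0$, use $\varepsilon_i\leq\eta$ to absorb the inner union into $\sum_i\partial_\eta f_i(x)$, and close the loop via $\sum_i\partial_\eta f_i(x)\subseteq\partial_{m\eta}(\sum_i f_i)(x)$ followed by intersecting over $\eta>0$. The paper merely writes this as a single sandwich chain rather than two separate inclusions, and omits your (harmless, correct) preliminary reduction to $x\in\bigcap_i\dom f_i$.
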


\begin{proof}
By Theorem~\ref{Theman:1}(iv), we have
\begin{align*}
\partial(f_1+\cdots+f_m)(x)&= \bigcap_{\eta>0} \left[ \bigcup_{\varepsilon_i\geq0, \, \sum_{i=1}^m\varepsilon_i=\eta}
 \Big(\partial_{\varepsilon_1} f_1(x)+\cdots+\partial_{\varepsilon_m} f_m(x)\Big)\right]\\
 &\subseteq
\bigcap_{\eta>0}
\Big(\sum_{i=1}^m \partial_{\eta} f_i(x)\Big)\subseteq \bigcap_{\eta>0}\Big(\partial_{m\eta}(\sum_{i=1}^m f_i)(x)\Big)
=\partial(\sum_{i=1}^m f_i)(x).
\end{align*}
Hence $\partial(f_1+\cdots+f_m)(x)= \bigcap_{\eta>0} \left[ \partial_{\eta} f_1(x)+\cdots+\partial_{\eta} f_m(x)\right]$.
\end{proof}

Without the constraint qualification in Theorem~\ref{Theman:1}, Corollary~\ref{CorsubDe:1} need not hold, as shown in the following example.
We denote by $\overline{\spand\{C\}}$ the closed linear subspace spanned by a set $C$.

\begin{example}Let $\NN:=\{0,1,2,\ldots\}$.
Suppose that $H$ is an infinite-dimensional Hilbert space and let $(e_n)_{n\in\NN}$ be an orthonormal sequence in $H$. Set \begin{align*}
C:=\overline{\spand\{e_{2n}\}_{n\in\NN}}\quad\text{and}\quad  D:=\overline{\spand\{\cos(\theta_n) e_{2n}+\sin(\theta_n) e_{2n+1} \}_{n\in\NN}},
\end{align*}
where $(\theta_n)_{n\in\NN}$ is a sequence in $\left]0,\tfrac{\pi}{2}\right]$
such that $\sum_{n\in\NN} \sin^2(\theta_n)<+\infty$.  Define $f,g:H\rightarrow\RX$ by
\begin{align}
f:=\iota_{C^{\bot}}\quad\text{and}\quad g:=\iota_{D^{\bot}}.
\end{align}
Then $f$ and $g$ are proper lower semicontinuous and convex, and constraint qualifications in Theorem~\ref{Theman:1} fail. Moreover, \begin{align*}\partial(f+g)(x)\neq \bigcap_{\eta>0} \left[ \partial_{\eta} f(x)+\partial_{\eta} g(x)\right],\quad \forall x\in \dom f\cap\dom g.
\end{align*}
\end{example}
\begin{proof}
Since $C,D$ are closed linear subspaces, $f$ and $g$ are proper lower semicontinuous and convex.
Let $x\in \dom f\cap\dom g$ and $\eta>0$. Then we have $\partial_{\eta} f(x)=C^{\bot\bot}=C$ and
$\partial_{\eta} g(x)=D^{\bot\bot}=D$ and thus  $\partial_{\eta} f(x)+\partial_{\eta} g(x)=C+D$.
 Hence
 \begin{align}\bigcap_{\eta>0} \left[ \partial_{\eta} f(x)+\partial_{\eta} g(x)\right]=C+D.\label{CouExasu:1}
 \end{align}
 Then by \cite[Example~3.34]{BC2011}, $\bigcap_{\eta>0} \left[ \partial_{\eta} f(x)+\partial_{\eta} g(x)\right]$ is not norm closed and hence $\bigcap_{\eta>0} \left[ \partial_{\eta} f(x)+\partial_{\eta} g(x)\right]$ is not weak$^*$ closed by \cite[Theorem~3.32]{BC2011}.
However, $\partial(f+g)(x)$ is  weak$^*$ closed. Hence $\partial(f+g)(x)\neq \bigcap_{\eta>0} \left[ \partial_{\eta} f(x)+\partial_{\eta} g(x)\right]$.

Note that $\overline{\partial_{\eta} f(x)+\partial_{\eta} g(x)}^{\wk}=\overline{C+D}^{\wk}\nsubseteq C+D=\partial_{\varepsilon} f(x)+\partial_{\varepsilon} g(x),\,\forall \varepsilon>0$. Hence the constraint qualification in Theorem~\ref{Theman:1}\ref{mainGed:1} fails.
\end{proof}

\section{Further consequences of our main result}\label{intepCorL}
 In this section, we will recapture various forms of Rockafellar's Fenchel duality theorem.

\begin{lemma}[Interiority]
\label{DualCorL:Le4Re}
Let $m\in\NN$, and $\varepsilon_i\geq 0$ and let
 $f_i:X\rightarrow\RX$ be proper  convex,
 where $i\in\{1,2,\ldots,m\}$. Assume that there exists $x_0\in\big(\bigcap_{i=1}^{m}\dom f_i\big)$ such that
 $f_i$ is continuous at $x_0$ for every $i\in\{2,3,\ldots,m\}$.
 Then for every $x\in \big(\bigcap_{i=1}^{m}\dom f_i\big)$, the set $\sum_{i=1}^{m}\partial_{\varepsilon_i} f_i(x)$
is weak$^*$ closed. Moreover, for every $z\in \big(\bigcap_{i=1}^{m}\dom \overline{f_i}\big)$, the set $\sum_{i=1}^{m}\partial_{\varepsilon_i} \overline{f_i}(z)$
is weak$^*$ closed.
\end{lemma}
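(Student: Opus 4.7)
The plan is to prove the second assertion first (weak$^*$ closedness of $\sum_i\partial_{\varepsilon_i}\overline{f_i}(z)$ for $z\in\bigcap_i\dom\overline{f_i}$) and then deduce the first as an immediate corollary. For the reduction, since $f_i^*=\overline{f_i}^*$, a direct computation from the definition yields the pointwise identity
\[
\partial_{\varepsilon_i} f_i(x)=\partial_{\varepsilon_i-(f_i(x)-\overline{f_i}(x))}\,\overline{f_i}(x).
\]
If $\varepsilon_i<f_i(x)-\overline{f_i}(x)$ for some $i$, then $\partial_{\varepsilon_i}f_i(x)$ is empty and the whole sum is trivially weak$^*$ closed; otherwise the sum equals $\sum_i\partial_{\varepsilon_i'}\overline{f_i}(x)$ for nonnegative $\varepsilon_i'$, and the second assertion at $z=x$ (permitted because $\bigcap_i\dom f_i\subseteq\bigcap_i\dom\overline{f_i}$) concludes.

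For the second assertion, I may assume every $\overline{f_i}$ is proper lower semicontinuous and convex (if not, the corresponding $\partial_{\varepsilon_i}\overline{f_i}(z)$ is empty and the sum collapses). I next observe that $\overline{f_i}$ inherits continuity at $x_0$ from $f_i$ for $i\geq 2$, because $\overline{f_i}\leq f_i$ is bounded above on a neighborhood of $x_0$ while $\overline{f_i}(x_0)=f_i(x_0)$. Now take a net $(z^*_\alpha)\subseteq\sum_i\partial_{\varepsilon_i}\overline{f_i}(z)$ with $z^*_\alpha\weakstarly z^*$ and decompose $z^*_\alpha=\sum_{i=1}^m y^*_{i,\alpha}$, $y^*_{i,\alpha}\in\partial_{\varepsilon_i}\overline{f_i}(z)$. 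The target is to prove that $(y^*_{i,\alpha})_\alpha$ is eventually contained in a weak$^*$ compact set for each $i\geq 2$. Once this is done, passing to a common subnet gives $y^*_{i,\alpha}\weakstarly y^*_i\in\partial_{\varepsilon_i}\overline{f_i}(z)$ for $i\geq 2$ (weak$^*$ closedness), and consequently $y^*_{1,\alpha}=z^*_\alpha-\sum_{i\geq 2}y^*_{i,\alpha}\weakstarly z^*-\sum_{i\geq 2}y^*_i=:y^*_1\in\partial_{\varepsilon_1}\overline{f_1}(z)$, so $z^*=\sum_{i=1}^m y^*_i$ belongs to the sum.

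The heart of the proof is the equicontinuity estimate. For each $i\geq 2$, fix a balanced neighborhood $V_i$ of $0\in X$ and $M_i>0$ with $\overline{f_i}(x_0+h)\leq M_i$ for $h\in V_i$. Plugging $y=x_0+h$ into the $\varepsilon_i$-subgradient inequality for $y^*_{i,\alpha}$ at $z$ gives
\[
p_i(y^*_{i,\alpha}):=\sup_{h\in V_i}\langle h,y^*_{i,\alpha}\rangle \leq M_i-\overline{f_i}(z)+\varepsilon_i+\langle z-x_0, y^*_{i,\alpha}\rangle,
\]
where $p_i$ is the Minkowski functional of the weak$^*$ compact polar $V_i^{\circ}$. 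The main obstacle is that $\langle z-x_0, y^*_{i,\alpha}\rangle$ need not be individually bounded above; probing at $y=x_0$ only controls it from below. The fix is to \emph{sum} these inequalities over $i\geq 2$ and collapse the offending terms via $\sum_{i\geq 2}y^*_{i,\alpha}=z^*_\alpha-y^*_{1,\alpha}$. The net $\langle z-x_0, z^*_\alpha\rangle$ converges (hence is eventually bounded), and the $\varepsilon_1$-subgradient inequality for $y^*_{1,\alpha}$ tested at $y=x_0$ yields $\langle z-x_0, y^*_{1,\alpha}\rangle\geq \overline{f_1}(z)-\overline{f_1}(x_0)-\varepsilon_1$, which is precisely the needed upper bound on $-\langle z-x_0, y^*_{1,\alpha}\rangle$. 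Therefore $\sum_{i\geq 2}p_i(y^*_{i,\alpha})\leq C$ eventually; since each summand is nonnegative, each $y^*_{i,\alpha}$ lies in the weak$^*$ compact set $C\cdot V_i^{\circ}$, and the subnet argument sketched above completes the proof.
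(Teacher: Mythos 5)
Your proposal is correct and follows essentially the same route as the paper: both proofs bound $\sup\langle V, y^*_{i,\alpha}\rangle$ for $i\geq 2$ by the local upper bound on $f_i$ near $x_0$ plus the term $\langle z-x_0, y^*_{i,\alpha}\rangle$, then sum over $i\geq 2$, absorb the cross term using the convergence of $z^*_\alpha$ together with the one-sided subgradient estimate for $y^*_{1,\alpha}$ at $x_0$, and finish with Banach--Alaoglu, a common subnet, and the weak$^*$ closedness of each $\partial_{\varepsilon_i}f_i(z)$. The only (harmless) cosmetic differences are that you prove the $\overline{f_i}$ version first and transfer it to $f_i$ via the shift identity $\partial_{\varepsilon}f(x)=\partial_{\varepsilon-(f(x)-\overline{f}(x))}\overline{f}(x)$, whereas the paper argues directly for $f_i$ and handles $\overline{f_i}$ by saying ``similarly.''
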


\begin{proof}
We can and do suppose that $x_0=0$.
Then  there exist a neighbourhood $V$ of $0$ and $K>\max\{0,f_1(0)\}$ such that  $V=-V$ (see \cite[Theorem~1.14(a)]{Rudin}) and
\begin{align}
V\subseteq\dom f_i\quad\text{and}\quad \sup_{y\in V}\overline{f_i}(y)\leq
\sup_{y\in V}f_i(y)\leq K,\quad\forall i\in\{2,3,\ldots,m\}.\label{GeDu:L1C0}
\end{align}
Let $x\in \bigcap_{i=1}^{m}\dom f_i$, $x^*\in \overline{\left[\sum_{i=1}^{m}\partial_{\varepsilon_i} f_i(x)\right]}^{\wk}$.  We will show that
\begin{align}
x^*\in\sum_{i=1}^{m}\partial_{\varepsilon_i} f_i(x).\label{GeDu:L1C1}
\end{align}
Our assumption on $x^*$ implies that for every $i=1,\ldots,m$ there exists a net
$(x^*_{i,\alpha})_{\alpha\in I}$ in $\partial_{\varepsilon_i} f_i(x)$
such that
\begin{align}
\sum_{i=1}^{m}x^*_{i,\alpha}\weakstarly x^*.\label{GeDu:L10C2}
\end{align}
We have
\begin{align}
 f^*_i(x^*_{i,\alpha})\leq-f_i(x)+\langle x, x^*_{i,\alpha}\rangle+\varepsilon_i,\quad
\forall i\in\{1,2,\ldots,m\}\quad\forall \alpha\in I\label{GeDu:L10C3}
\end{align}
Now we claim that
\begin{align}
\big\{\sum_{i=2}^m \sup|\langle x^*_{i,\alpha},V\rangle|\big\}_{\alpha\in I}=\big\{\sum_{i=2}^m \sup\langle x^*_{i,\alpha},V\rangle\big\}_{\alpha\in I}\quad\text{is eventually bounded}.\label{GeDu:L10C4a}
\end{align}
In other words, we will find a terminal set $J\subseteq I$ and $R>0$ such that
$\sum_{i=2}^m \sup\langle x^*_{i,\alpha},V\rangle\le R$ for all $\alpha\in J$. Fix $i\in\{2,\ldots,m\}$.  By \eqref{GeDu:L10C3}, we
have\begin{align}
&-f_i(x)+\langle x, x^*_{i,\alpha}\rangle+\varepsilon_i
\geq\sup_{y\in V}\{\langle x^*_{i,\alpha}, y\rangle-f_i(y)\}\geq\sup_{y\in V}\{\langle x^*_{i,\alpha}, y\rangle-K\}\quad\text{(by \eqref{GeDu:L1C0}})\nonumber\\
&=\sup\langle x^*_{i,\alpha},V\rangle-K.\label{GeDu:L10C4}
\end{align}
Then we have
\begin{align}
&-\sum_{i=2}^mf_i(x)+\langle x, \sum_{i=2}^m x^*_{i,\alpha}\rangle+\sum_{i=2}^m \varepsilon_i
\geq\sum_{i=2}^m\sup\langle x^*_{i,\alpha},V\rangle-(m-1)K,\quad\forall \alpha\in I.\label{GeDu:L10C5}
\end{align}
Since $0\in\dom f_1$ and , $f^*_1(x^*_{1,\alpha})\geq-f_1(0)\geq-K$. Then by
\eqref{GeDu:L10C3},
\begin{align}
&-f_1(x)+\langle x, x^*_{1,\alpha}\rangle+\varepsilon_1\geq-K,\quad\forall \alpha\in I.\label{GeDu:L10C6}
\end{align}
Combining \eqref{GeDu:L10C5} and \eqref{GeDu:L10C6}
\begin{align*}
-\sum_{i=1}^mf_i(x)+\langle x, \sum_{i=1}^mx^*_{i,\alpha}\rangle+\sum_{i=1}^m \varepsilon_i
\geq\sum_{i=2}^m\sup\langle x^*_{i,\alpha},V\rangle-mK,\quad\forall \alpha\in I.
\end{align*}
Then by \eqref{GeDu:L10C2},
\begin{align}
-\sum_{i=1}^mf_i(x)+\langle x,x^*\rangle+\sum_{i=1}^m \varepsilon_i
\geq\limsup_{\alpha \in I}\sum_{i=2}^m\sup\langle x^*_{i,\alpha},V\rangle-mK.\label{GeDu:L10C7}
\end{align}
Hence \eqref{GeDu:L10C4a} holds.

Then by \eqref{GeDu:L10C4a} and the Banach-Alaoglu Theorem
(see \cite[Theorem~3.15]{Rudin} or \cite[Theorem~1.1.10]{Zalinescu}),  there  exists a weak* convergent \emph{subnet}
$(x^*_{i,\gamma})_{\gamma\in\Gamma}$ of $(x^*_{i,\alpha})_{\alpha\in I}$ such that
\begin{align}x^*_{i,\gamma}\weakstarly  x^*_{i,\infty}\in X^*,\quad i\in\{2,\ldots,m\}.\label{GeDu:L10C8}\end{align}
Since $\partial_{\varepsilon_i}f_i(x)$ is weak$^*$ closed by \cite[Theorem~2.4.2]{Zalinescu}, then \begin{align}x_{i,\infty}^*
\in\partial_{\varepsilon_i}f_i(x),\quad\forall i\in\{2,\ldots,m\}.\label{GeDu:L10C9}
\end{align}
Then by \eqref{GeDu:L10C2},
\begin{align}
x^*-\sum_{i=2}^m x_{i,\infty}^*\in\partial_{\varepsilon_1}f_1(x).\label{GeDu:L10C10}
\end{align}
Combining the above two equations, we have
\begin{align*}
x^*\in\sum_{i=1}^m \partial_{\varepsilon_i}f_i(x).
\end{align*}
Hence
 $\sum_{i=1}^{m}\partial_{\varepsilon_i} f_i(x)$
is weak$^*$ closed.

Similarly, the set $\sum_{i=1}^{m}\partial_{\varepsilon_i} \overline{f_i}(z)$
is weak$^*$ closed  for every $z\in \big(\bigcap_{i=1}^{m}\dom \overline{f_i}\big)$.
\end{proof}

\begin{lemma}
\label{DualCorL:Le4}
Suppose that $X$ is a Banach space.
Let $m\in\NN$, and $\varepsilon_i\geq0$ and
 $f_i:X\rightarrow\RX$ be proper lower semicontinuous and convex,
 where $i\in\{1,2,\ldots,m\}$. Assume that \[\dom f_1\cap\big(\bigcap_{i=2}^{m}\inte\dom f_i\big)\neq\varnothing.\] Then for every $x\in \bigcap_{i=1}^{m}\dom f_i$, the set $\sum_{i=1}^{m}\partial_{\varepsilon_i} f_i(x)$
is weak$^*$ closed.
\end{lemma}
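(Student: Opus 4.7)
The plan is to reduce Lemma \ref{DualCorL:Le4} to the already-established Lemma \ref{DualCorL:Le4Re}. The only difference between the two hypotheses is that Lemma \ref{DualCorL:Le4Re} requires the functions $f_2,\ldots,f_m$ to be \emph{continuous} at a common point $x_0\in\bigcap_{i=1}^m\dom f_i$, whereas Lemma \ref{DualCorL:Le4} only demands that $x_0$ lie in $\dom f_1\cap\bigcap_{i=2}^m\inte\dom f_i$, together with the Banach space assumption and lower semicontinuity of each $f_i$.

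The bridge is the classical fact from convex analysis: a proper lower semicontinuous convex function on a Banach space is continuous at every point of the interior of its domain (see, e.g., \cite[Corollary~2.2.10]{Zalinescu}). This is precisely where the Banach space hypothesis is used, and where the lower semicontinuity (absent in Lemma \ref{DualCorL:Le4Re}) enters.

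So my proof would proceed in two short steps. First, pick any $x_0\in\dom f_1\cap\bigl(\bigcap_{i=2}^m\inte\dom f_i\bigr)$, which is nonempty by hypothesis. For each $i\in\{2,\ldots,m\}$ we have $x_0\in\inte\dom f_i$; combined with the fact that $f_i$ is proper lsc convex on the Banach space $X$, the cited classical result gives continuity of $f_i$ at $x_0$. Second, the hypotheses of Lemma \ref{DualCorL:Le4Re} are now fulfilled at the common point $x_0$, and its conclusion yields exactly that $\sum_{i=1}^m \partial_{\varepsilon_i} f_i(x)$ is weak$^*$ closed for every $x\in\bigcap_{i=1}^m\dom f_i$.

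There is essentially no obstacle: the real work was done inside Lemma \ref{DualCorL:Le4Re} (the Banach-Alaoglu subnet extraction after bounding $\sum_{i=2}^m\sup\langle x^*_{i,\alpha},V\rangle$). The only point to verify carefully is that the classical continuity result is applied correctly: we need $f_i$ proper, lsc, convex, and $X$ Banach, all of which are assumed, to pass from ``interior of the domain'' to ``continuity''. No additional hypothesis on $f_1$ beyond properness and membership of $x_0$ in $\dom f_1$ is needed, matching exactly the asymmetry present in Lemma \ref{DualCorL:Le4Re}.
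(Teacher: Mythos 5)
Your proposal is correct and matches the paper's own proof, which likewise observes that lower semicontinuity plus the Banach space hypothesis gives continuity of $f_i$ on $\inte\dom f_i$ for $i\in\{2,\ldots,m\}$ (the paper cites \cite[Proposition~3.3]{ph}) and then invokes Lemma~\ref{DualCorL:Le4Re} directly. No further comment is needed.
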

\begin{proof}
By
\cite[Proposition~3.3]{ph}, we conclude that $f_i$ is continuous for $i\in\{2,\ldots,m\}$. Apply now
Lemma~\ref{DualCorL:Le4Re} directly.
\end{proof}

The following results recapture various known exactness results as consequences of our main results.

\begin{corollary}
\label{DualCorL:RL4}\emph{(See \cite[Theorem~3.5.8]{BotGradWank}.)}
Let $m\in\NN$, and $\varepsilon_i\geq0$ and
 $f_i:X\rightarrow\RX$ be proper  convex,
 where $i\in\{1,2,\ldots,m\}$. Assume that  there exists $x_0\in\big(\bigcap_{i=1}^{m}\dom f_i\big)$ such that
 $f_i$ is continuous at $x_0$ for every $i\in\{2,3,\ldots,m\}$. Then
$(\sum_{i=1}^m f_i)^*=f_1^*\Box\cdots \Box f_m^*$  in $X^*$ and the infimal convolution is exact everywhere.
Furthermore, $\partial (f_1+f_2+\cdots+f_m)=\partial f_1+\cdots+\partial f_m$.
\end{corollary}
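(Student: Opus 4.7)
The plan is to assemble the claim by combining Lemma~\ref{DualCorL:Le4Re}, the implication \ref{mainGed:1}$\Rightarrow$\ref{mainGed:2} of Theorem~\ref{Theman:1}, a weak$^*$ compactness argument for exactness of the infimal convolution, and finally Lemma~\ref{Lemresou} for the subdifferential identity.

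First, I would invoke Lemma~\ref{DualCorL:Le4Re} with all $\varepsilon_i$ equal to a common $\varepsilon$: the continuity of $f_i$ at $x_0$ for $i \in \{2,\ldots,m\}$ gives that $\sum_{i=1}^{m} \partial_{\varepsilon} f_i(x)$ is weak$^*$ closed for every $x \in \bigcap_{i=1}^m \dom f_i$ and every $\varepsilon>0$. Thus condition \ref{mainGed:1} of Theorem~\ref{Theman:1} holds with the constant $K=1$. Continuity at $x_0$ also forces $f_i$ to be continuous on $\inte \dom f_i$ for $i\geq 2$, so $f_i=\overline{f_i}$ there; combined with the accessibility lemma (any segment from $x_0$ to a point of $\overline{\dom f_i}$ stays in $\inte \dom f_i$) and the line-segment continuity of convex functions, this delivers the hypothesis $\overline{f_i}=f_i$ on $\bigcap_i \dom \overline{f_i}$ of Theorem~\ref{Theman:1}; alternatively, replace $f_i$ by $\overline{f_i}$ throughout and recover the conclusion for the $f_i$ at the end using $f_i^{*}=\overline{f_i}^{*}$. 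The implication \ref{mainGed:1}$\Rightarrow$\ref{mainGed:2} of Theorem~\ref{Theman:1} then yields $(\sum_{i=1}^m f_i)^{*}=f_1^{*}\Box\cdots\Box f_m^{*}$ on $X^{*}$.

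Next I would establish exactness at an arbitrary $x^{*}\in\dom(\sum_{i=1}^m f_i)^{*}$. Pick $(y_i^{*,n})_{n\in\NN}$ with $\sum_{i=1}^m y_i^{*,n}=x^{*}$ and $\sum_{i=1}^m f_i^{*}(y_i^{*,n})\to (\sum_{i=1}^m f_i)^{*}(x^{*})$. Re-running the estimate used in the proof of Lemma~\ref{DualCorL:Le4Re} — writing $V$ for a symmetric neighbourhood of $0$ on which $f_i(x_0+\cdot)\leq K$ for $i\geq 2$ — one obtains
\begin{align*}
\sup\langle y_i^{*,n},V\rangle \leq f_i^{*}(y_i^{*,n})+\langle x_0,y_i^{*,n}\rangle+K,
\end{align*}
so that $(\sum_{i=2}^m\sup\langle y_i^{*,n},V\rangle)_n$ is eventually bounded. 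Banach--Alaoglu then produces a weak$^{*}$ convergent subnet $y_i^{*,\gamma}\weakstarly y_i^{*,\infty}$ for every $i\geq 2$, and the linear constraint forces $y_1^{*,\gamma}\weakstarly x^{*}-\sum_{i=2}^m y_i^{*,\infty}=:y_1^{*,\infty}$. Weak$^{*}$ lower semicontinuity of each $f_i^{*}$ gives $\sum_i f_i^{*}(y_i^{*,\infty})\leq \lim_\gamma \sum_i f_i^{*}(y_i^{*,\gamma}) = (\sum_i f_i)^{*}(x^{*})$, and the reverse inequality is automatic, so exactness holds at $x^{*}$.

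Finally, with the conjugate identity and everywhere-exactness of the infimal convolution in hand, Lemma~\ref{Lemresou} applies verbatim and delivers $\partial(f_1+f_2+\cdots+f_m)=\partial f_1+\cdots+\partial f_m$. The main obstacle is the bookkeeping around $f_1$, which is only assumed proper convex: both the verification of $\overline{f_i}=f_i$ on $\bigcap_i \dom \overline{f_i}$ and the control of $y_1^{*,n}$ in the exactness argument are indirect, the latter handled by the subtraction $y_1^{*,n}=x^{*}-\sum_{i=2}^m y_i^{*,n}$ rather than by any local-boundedness statement on $f_1$ itself.
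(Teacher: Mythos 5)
Your overall architecture (closedness of $\sum_i\partial_\varepsilon f_i(x)$ via Lemma~\ref{DualCorL:Le4Re}, then Theorem~\ref{Theman:1}\ref{mainGed:1}$\Rightarrow$\ref{mainGed:2}, then a Banach--Alaoglu argument for exactness, then Lemma~\ref{Lemresou}) matches the paper's, and your exactness argument via a minimizing sequence for the infimal convolution is a legitimate variant of the one in the paper, which instead routes through $\varepsilon$-subdifferentials at an approximate minimizer. But there are two genuine gaps, both located exactly where you acknowledge ``the bookkeeping around $f_1$'' to be delicate.

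First, your primary route for verifying the hypothesis $\overline{f_i}=f_i$ on $\bigcap_i\dom\overline{f_i}$ of Theorem~\ref{Theman:1} does not work. For $i=1$ the function is only proper convex, and nothing prevents $\overline{f_1}(y)<f_1(y)$ at a point $y\in\bigcap_i\dom\overline{f_i}$ (take $f_1=\iota_{\left]0,1\right]}$ on $\RR$ and $y=0$); even for $i\geq 2$, continuity at $x_0$ controls $f_i$ only on $\inte\dom f_i$, and $f_i$ may fail to be lower semicontinuous at boundary points of its domain that lie in $\bigcap_i\dom\overline{f_i}$. So you are forced into your fallback: apply the theorem to the closures $\overline{f_i}$. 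That is what the paper actually does, but it requires two further ingredients you omit. (a) To transfer $(\sum_i\overline{f_i})^*=\overline{f_1}^*\Box\cdots\Box\overline{f_m}^*$ back to $(\sum_i f_i)^*=f_1^*\Box\cdots\Box f_m^*$ you need $(\sum_i f_i)^*=(\sum_i\overline{f_i})^*$, i.e.\ $\overline{f_1+\cdots+f_m}=\overline{f_1}+\cdots+\overline{f_m}$; this is false for general proper convex functions and is precisely where the paper invokes \cite[Lemma~15]{BanLoZa}, using the continuity hypothesis. The identity $f_i^*=\overline{f_i}^*$ alone does not give it, since the inequality $(\sum_i\overline{f_i})^*\geq(\sum_i f_i)^*$ goes the wrong way. (b) Theorem~\ref{Theman:1} and Lemma~\ref{Lemresou} require proper functions, and $\overline{f_1}$ need not be proper; the paper treats this degenerate case separately (there $f_1^*\equiv+\infty$, so both sides of the conjugate identity are $+\infty$ and exactness is vacuous). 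Without (a) and (b) the argument does not close. (Minor points: in your boundedness estimate the term should be $-\langle x_0,y_i^{*,n}\rangle$, and to bound the sum over $i\geq 2$ you also need the Fenchel--Young lower bound $f_i^*(y_i^{*,n})-\langle x_0,y_i^{*,n}\rangle\geq-f_i(x_0)$ to discard the $i=1$ term from the telescoped total.)
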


\begin{proof}
By \cite[Lemma~15]{BanLoZa},
\begin{align}
\overline{f_1+f_2\ldots+f_m}=\overline{f_1}+\overline{f_2\ldots+f_m}
=\ldots=\overline{f_1}+\overline{f_2}+\cdots+\overline{f_m}.\label{RoclGEx:1}
\end{align}
By the assumption, we have $x_0\in\dom \overline{f_1}\cap\big(\bigcap_{i=2}^{m}\inte\dom \overline{f_i}\big)$
and $\overline{f_i}$ is proper for every $i\in\{2,3,\ldots,m\}$ by \cite[Theorem~2.3.4(ii)]{Zalinescu}.

We consider two cases.

\emph{Case 1}: $\overline{f_1}$ is proper.

By \eqref{RoclGEx:1}, Lemma~\ref{DualCorL:Le4Re} and  Theorem~\ref{Theman:1} (applied to $\overline{f_i}$), we have
\begin{align}
(\sum_{i=1}^m f_i)^*=\Big(\overline{\sum_{i=1}^m f_i}\Big)^*=(\sum_{i=1}^m \overline{f_i})^*=\overline{f_1}^*\Box\cdots \Box \overline{f_m}^*=
f_1^*\Box\cdots \Box f_m^*. \label{GeDu:L10C10d}\end{align}
Let $x^*\in X^*$. Next we will show that
$(f_1^*\Box\cdots \Box f_m^*)(x^*)$ is achieved.
This is clear when $x^*\notin\dom (\sum_{i=1}^m f_i)^*$  by \eqref{GeDu:L10C10d}.
Now suppose that $x^*\in\dom (\sum_{i=1}^m f_i)^*$  and then
$(\sum_{i=1}^m f_i)^*(x^*)\in\RR$.
By \eqref{GeDu:L10C10d}, there exists $(x^*_{i,n})_{n\in\NN}$ such that $\sum_{i=1}^m x^*_{i,n}= x^*$
and
\begin{align}
f^*_1(x^*_{1,n})+f^*_2(x^*_{2,n})+\cdots+f^*_m(x^*_{m,n})
\leq(\sum_{i=1}^m f_i)^*(x^*)+\frac{1}{2n}.\label{GeDu:L10C11}
\end{align}
Since $x^*\in\dom (\sum_{i=1}^m f_i)^*$,
there exists $x\in X$ such that $x\in \partial _{\frac{1}{2n}} (\sum_{i=1}^m f_i)^* (x^*)$. Then
by \eqref{RoclGEx:1},
\begin{align*}
(\sum_{i=1}^m f_i)^*(x^*)+(\sum_{i=1}^m \overline{f_i})(x)&=
(\sum_{i=1}^m f_i)^*(x^*)+\big(\overline{\sum_{i=1}^m f_i}\big)(x)=
(\sum_{i=1}^m f_i)^*(x^*)+(\sum_{i=1}^m f_i)^{**}(x)\\
&\leq \langle x, x^*\rangle+\frac{1}{2n}.
\end{align*}
Then by \eqref{GeDu:L10C11},
\begin{align*}f^*_1(x^*_{1,n})+f^*_2(x^*_{2,n})+\cdots+f^*_m(x^*_{m,n})+(\sum_{i=1}^m \overline{f_i})(x)
\leq \langle x, x^*\rangle+\frac{1}{n}.
\end{align*}
Hence
\begin{align}
x^*_{i,n}\in\partial_{\frac{1}{n}} \overline{f_i}(x),\quad\forall i\in\{1,2,\ldots,m\},\forall n\in\NN.
\end{align}
By the assumptions,  there exist a neighbourhood $V$ of $0$ and $K>\max\{0,f_1(0)\}$ such that  $V=-V$ (see \cite[Theorem~1.14(a)]{Rudin}) and
\begin{align*}
V\subseteq\dom f_i\quad\text{and}\,
\sup\overline{f_i}(V)\leq\sup f_i(V)\leq K,\quad\forall i\in\{2,3,\ldots,m\}.
\end{align*}
As in the proof of Lemma~\ref{DualCorL:Le4Re},
$\big(\sum_{i=2}^m \sup|\langle x^*_{i,n}, V\rangle|\big)_{n\in\NN}$ is bounded and then
there  exists a weak* convergent \emph{subnet}
$(x^*_{i,\gamma})_{\gamma\in\Gamma}$ of $(x^*_{i,n})_{n\in\NN}$ such that
\begin{align}x^*_{i,\gamma}&\weakstarly  x^*_{i,\infty}\in X^*,\quad i\in\{2,\ldots,m\}\nonumber\\
x^*_{1,\gamma}&\weakstarly  x^*-\sum_{i=2}^mx^*_{i,\infty}\in X^*.\label{GeDu:L10C12}\end{align}
Combining \eqref{GeDu:L10C12} and taking the limit along the subnets in \eqref{GeDu:L10C11}, we have
\begin{align}
f^*_1( x^*-\sum_{i=2}^mx^*_{i,\infty})+f^*_2(x^*_{2,\infty})+\cdots+f^*_m(x^*_{m,\infty})
\leq(\sum_{i=1}^m f_i)^*(x^*).\label{GeDu:L10C14}
\end{align}
By
\eqref{GeDu:L10C10d} again and \eqref{GeDu:L10C14},
\begin{align*}
f^*_1( x^*-\sum_{i=2}^mx^*_{i,\infty})+f^*_2(x^*_{2,\infty})+\cdots+f^*_m(x^*_{m,\infty})
=(f_1^*\Box\cdots \Box f_m^*)(x^*).
\end{align*}
Hence $f_1^*\Box\cdots \Box f_m^*$ is achieved at $x^*$.

By Lemma~\ref{Lemresou},
we  have $\partial (f_1+f_2+\cdots+f_m)=\partial f_1+\cdots+\partial f_m$

\emph{Case 2}: $\overline{f_1}$ is  not proper.

Since $x_0\in\dom\overline{f_1}$, we have there exists $y_0\in X$ such that $\overline{f_1}(y_0)=-\infty$ and thus $ \overline{f_1}(x)=-\infty$ for every $x\in\dom \overline{f_1}$ by \cite[Proposition~2.4]{EkeTem}.
Thus by \eqref{RoclGEx:1},
\begin{align}
\overline{(f_1+f_2\ldots+f_m)}(x_0)=\overline{f_1}(x_0)+\overline{f_2}(x_0)+\cdots+\overline{f_m}(x_0)
=-\infty\label{RoeqTd:2}
\end{align} since $\overline{f_i}$ is proper for every $\in\{2,3,\ldots,m\}$ and $x_0\in\dom \overline{f_1}\cap\big(\bigcap_{i=2}^{m}\inte\dom \overline{f_i}\big)$.

We also have  $f^*_1=+\infty$ and then
\begin{align}
f_1^*\Box\cdots \Box f_m^*=+\infty.\label{RoeqTd:1}
\end{align}
Then by \eqref{RoeqTd:2}, we have
\begin{align*}
(\sum_{i=1}^m f_i)^*=\Big(\overline{\sum_{i=1}^m f_i}\Big)^*
=+\infty=f_1^*\Box\cdots \Box f_m^*.
\end{align*}
Hence $f_1^*\Box\cdots \Box f_m^*$ is exact everywhere.

Apply Lemma~\ref{Lemresou} directly to obtain that $\partial (f_1+f_2+\cdots+f_m)=\partial f_1+\cdots+\partial f_m$.

Combining the above two cases, the result holds.
\end{proof}

\begin{corollary}
\label{DualCorL:4}
Suppose that $X$ is a Banach space.
Let $m\in\NN$, and
 $f_i:X\rightarrow\RX$ be proper lower semicontinuous and convex with $\dom f_1\cap\big(\bigcap_{i=2}^{m}\inte\dom f_i\big)\neq\varnothing$, where $i\in\{1,2,\ldots,m\}$. Then
$(\sum_{i=1}^m f_i)^*=f_1^*\Box\cdots \Box f_m^*$  in $X^*$ and the infimal convolution is exact everywhere.
Furthermore, $\partial (f_1+f_2+\cdots+f_m)=\partial f_1+\cdots+\partial f_m$.
\end{corollary}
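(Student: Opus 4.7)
The plan is to reduce Corollary~\ref{DualCorL:4} directly to Corollary~\ref{DualCorL:RL4}, whose hypothesis is a pointwise continuity condition rather than an interior-of-domain condition. So the only work is to upgrade ``$x_0 \in \inte \dom f_i$'' to ``$f_i$ is continuous at $x_0$'' for each $i\in\{2,\ldots,m\}$, and this is where the Banach space assumption enters.

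First, pick any $x_0\in \dom f_1\cap\big(\bigcap_{i=2}^{m}\inte\dom f_i\big)$, which exists by hypothesis. For each $i\in\{2,\ldots,m\}$, the function $f_i$ is proper, convex, and lower semicontinuous on the Banach space $X$, and $x_0\in\inte\dom f_i$. By the standard result cited already in the proof of Lemma~\ref{DualCorL:Le4} (namely \cite[Proposition~3.3]{ph}), a proper lsc convex function on a Banach space is continuous throughout the interior of its domain. Hence $f_i$ is continuous at $x_0$ for every $i\in\{2,\ldots,m\}$.

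With this continuity in hand, all the hypotheses of Corollary~\ref{DualCorL:RL4} are met: we have a point $x_0\in\bigcap_{i=1}^m\dom f_i$ at which $f_2,\ldots,f_m$ are continuous, and the $f_i$ are proper convex. Applying that corollary gives at once both the conjugate identity
\[
\Big(\sum_{i=1}^m f_i\Big)^{\!*}=f_1^*\Box\cdots\Box f_m^*\quad\text{in } X^*,
\]
with the infimal convolution exact everywhere, and the subdifferential sum formula $\partial(f_1+\cdots+f_m)=\partial f_1+\cdots+\partial f_m$.

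There is no genuine obstacle here; the only subtlety worth flagging is that the Banach space hypothesis is used only through the automatic continuity of lsc convex functions on the interior of their domains, so the corollary would extend verbatim to any setting (e.g., barrelled spaces) where the same automatic continuity holds. The lower semicontinuity hypothesis is also essential, since without it the passage from $\inte\dom f_i$ to continuity fails, and one would have to invoke Corollary~\ref{DualCorL:RL4} with the lsc hulls $\overline{f_i}$, as is done inside the proof of that corollary.
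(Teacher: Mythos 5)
Your proposal is correct and matches the paper's own proof exactly: the paper also invokes \cite[Proposition~3.3]{ph} to upgrade $x_0\in\inte\dom f_i$ to continuity of $f_i$ at $x_0$ for $i\in\{2,\ldots,m\}$, and then applies Corollary~\ref{DualCorL:RL4} directly. Your additional remarks on where the Banach space and lower semicontinuity hypotheses enter are accurate but not needed for the argument.
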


\begin{proof}
By
\cite[Proposition~3.3]{ph}, $f_i$ is continuous on $\inte\dom f_i$ for
$i\in\{2,\ldots,m\}$. Then apply Corollary~\ref{DualCorL:RL4} directly.
\end{proof}

\begin{corollary}[Rockafellar]
\label{DualCorL:5}\emph{(See {\cite[Theorem~4.1.19]{BorVan}} {\cite[Theorem~3]{Rock66}}, or
{\cite[Theorem~2.8.7(iii)]{Zalinescu}}.)}
Let
 $f,g:X\rightarrow\RX$ be proper  convex. Assume that there exists $x_0\in\dom f\cap\dom g$
 such that $f$ is continuous at $x_0$.
  Then
$(f+g)^*=f^*\Box g^*$ in $X^*$ and the infimal convolution is exact everywhere.
Furthermore, $\partial (f+g)=\partial f+\partial g$.

\end{corollary}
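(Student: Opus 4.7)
The plan is to observe that this is a direct specialization of Corollary~\ref{DualCorL:RL4} to the case $m=2$, modulo a relabeling that aligns the continuity hypothesis with the indexing convention of that corollary. In Corollary~\ref{DualCorL:RL4}, the continuity is required at a common point of the domains for all $f_i$ with $i\ge 2$, while here continuity is assumed for $f$. So the first move is to set $f_1:=g$ and $f_2:=f$. Then $\dom f_1\cap\dom f_2=\dom g\cap\dom f$ is nonempty by hypothesis, and $f_2=f$ is continuous at the common point $x_0$, which is exactly the hypothesis of Corollary~\ref{DualCorL:RL4}.

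Invoking Corollary~\ref{DualCorL:RL4} with these two functions then gives $(f_1+f_2)^*=f_1^*\Box f_2^*$ in $X^*$ with exactness everywhere, together with $\partial(f_1+f_2)=\partial f_1+\partial f_2$. Since both the sum of functions and the infimal convolution are commutative, this translates back as $(f+g)^*=f^*\Box g^*$ with the infimal convolution exact everywhere, and $\partial(f+g)=\partial f+\partial g$, which is the desired statement. There is no real obstacle here; the only subtlety is the ordering convention in Corollary~\ref{DualCorL:RL4}, which is handled by the trivial relabeling above.
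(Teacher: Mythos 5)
Your proposal is correct and matches the paper's proof, which simply reads ``Apply Corollary~\ref{DualCorL:RL4} directly''; the relabeling $f_1:=g$, $f_2:=f$ that you spell out is exactly the intended (and only) way to align the continuity hypothesis with that corollary's indexing.
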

\begin{proof}
Apply Corollary~\ref{DualCorL:RL4} directly.
\end{proof}

A {\em polyhedral set} is a subset of a Banach space defined as a
finite intersection of halfspaces.  A function $f: X\rightarrow\RX$ is
said to be \emph{polyhedrally convex} if $\epi f$ is a polyhedral set.

\begin{corollary}\label{poly}
  Let $m,k, d\in\NN$ and suppose that $X=\RR^d$, let
  $f_i:X\rightarrow\RX$ be a polyhedrally convex function for
  $i\in\{1,2,\ldots,k\}$.  Let $f_j:X\rightarrow\RX$ be proper convex
  for every $j\in\{k+1,k+2,\ldots,m\}$.  Assume that there exists
  $x_0\in \bigcap_{i=1}^m\dom f_i$ such that $f_i$ is continuous at
  $x_0$ for every $i\in\{k+1,k+2,\ldots,m\}$.

 Then
$(\sum_{i=1}^m f_i)^*=f_1^*\Box\cdots \Box f_m^*$ in $X^*$ and the infimal convolution is exact everywhere.
Furthermore, $\partial (f_1+f_2+\cdots+f_m)=\partial f_1+\cdots+\partial f_m$.
\end{corollary}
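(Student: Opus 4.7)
The plan is to split the functions into two groups, handle each with an appropriate calculus rule, and then glue the results together using Corollary~\ref{DualCorL:RL4}. Set $g := f_1 + \cdots + f_k$ (the polyhedral block) and $h := f_{k+1} + \cdots + f_m$ (the continuous block). Since the intersection of finitely many polyhedral sets is polyhedral, the epigraph of $g$ is polyhedral, so $g$ is itself polyhedrally convex. Note that $x_0 \in \bigcap_{i=1}^m \dom f_i \subseteq \dom g \cap \bigcap_{j=k+1}^{m} \dom f_j$, and by hypothesis each $f_j$ ($j \geq k+1$) is continuous at $x_0$.

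First I would handle the polyhedral block. The classical Rockafellar calculus for polyhedrally convex functions in $\RR^d$ (Convex Analysis, Theorems~20.1 and 23.8 / Corollary~23.8.1) yields, under the mere assumption $\bigcap_{i=1}^k \dom f_i \neq \varnothing$, the exact formulae
\begin{equation*}
g^* \;=\; f_1^* \Box \cdots \Box f_k^* \quad \text{(exact everywhere on $\dom g^*$)}, \qquad \partial g \;=\; \partial f_1 + \cdots + \partial f_k.
\end{equation*}
Second, I would apply Corollary~\ref{DualCorL:RL4} to the family $(g, f_{k+1}, \ldots, f_m)$. Here $g$ plays the role of the possibly discontinuous ``first'' function, while $f_{k+1}, \ldots, f_m$ are continuous at the common point $x_0 \in \dom g \cap \bigcap_{j=k+1}^m \dom f_j$. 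This is precisely the hypothesis of Corollary~\ref{DualCorL:RL4}, which delivers
\begin{equation*}
(g + h)^* \;=\; g^* \Box f_{k+1}^* \Box \cdots \Box f_m^* \quad\text{(exact everywhere)}, \qquad \partial(g+h) \;=\; \partial g + \partial f_{k+1} + \cdots + \partial f_m.
\end{equation*}

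Finally, I would chain the two formulas. For the conjugates, the associativity of infimal convolution together with exactness at each stage gives
\begin{equation*}
\Big(\sum_{i=1}^m f_i\Big)^* = (g+h)^* = g^* \Box f_{k+1}^* \Box \cdots \Box f_m^* = \big(f_1^* \Box \cdots \Box f_k^*\big) \Box f_{k+1}^* \Box \cdots \Box f_m^* = f_1^* \Box \cdots \Box f_m^*,
\end{equation*}
with the combined infimal convolution attained everywhere by picking optimizers at each stage and adding them. For the subdifferentials, substituting $\partial g = \partial f_1 + \cdots + \partial f_k$ into $\partial(g+h) = \partial g + \partial f_{k+1} + \cdots + \partial f_m$ yields $\partial(\sum_{i=1}^m f_i) = \sum_{i=1}^m \partial f_i$.

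The main obstacle is really a bookkeeping issue rather than a conceptual one: one must invoke (or verify) the classical polyhedral calculus that allows exact splitting $g^* = f_1^* \Box \cdots \Box f_k^*$ without any interiority assumption. The relevant fact — that polyhedrality is preserved under finite sums and that exact Fenchel duality holds within the polyhedral class in finite dimensions — is what the paper's own condition~\eqref{LiangjinCQ} captures naturally: for polyhedral functions, each $\partial_\varepsilon f_i(x)$ is a polyhedron, and finite sums of polyhedra in $\RR^d$ are closed, so the Bertsekas-type closedness needed to apply Theorem~\ref{Theman:1} is automatic on the polyhedral block. Once that is in hand, the rest of the proof is pure chaining.
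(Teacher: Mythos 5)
Your proposal is correct and follows the same basic strategy as the paper: split into a polyhedral block $g=f_1+\cdots+f_k$ and a continuous block, settle the polyhedral block separately, and glue with the interiority result. The only genuine difference is how the polyhedral block is justified. You invoke Rockafellar's classical polyhedral Fenchel duality (\cite[Theorems~20.1 and 23.8]{Rock70CA}) as an external black box, whereas the paper stays inside its own framework: it notes that each $f_i^*$ ($i\le k$) is polyhedrally convex by \cite[Theorem~19.2]{Rock70CA}, deduces that $\epi f_1^*+\cdots+\epi f_k^*$ is closed, and then applies its Corollary~\ref{DualCorL:1} (the closed-epigraph condition) together with Lemma~\ref{Lemresou} to get $g^*=f_1^*\Box\cdots\Box f_k^*$ exactly and $\partial g=\sum_{i\le k}\partial f_i$. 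The paper's route has the merit of exhibiting the polyhedral case as an instance of its own constraint qualifications; your route is shorter but imports the conclusion rather than deriving it. A second, cosmetic difference: the paper first glues $g_1$ and $g_2$ via the two-function Corollary~\ref{DualCorL:5} and then splits $g_2$ by Corollary~\ref{DualCorL:RL4}, while you apply Corollary~\ref{DualCorL:RL4} once to $(g,f_{k+1},\ldots,f_m)$; both are legitimate since $x_0\in\dom g\cap\bigcap_{j>k}\dom f_j$ and each $f_j$, $j>k$, is continuous there. One small slip worth fixing: your justification that $g$ is polyhedrally convex ("the intersection of finitely many polyhedral sets is polyhedral, so the epigraph of $g$ is polyhedral") conflates $\epi\big(\sum_{i=1}^k f_i\big)$ with $\bigcap_{i=1}^k\epi f_i$, which are different sets; the correct reference is \cite[Theorem~19.4]{Rock70CA} (sums of polyhedral convex functions are polyhedral), or one can bypass the claim entirely since only properness and lower semicontinuity of $g$ (from \cite[Corollary~19.1.2]{Rock70CA}) are actually needed to apply Corollary~\ref{DualCorL:RL4}. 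Your closing remark that the Bertsekas-type closedness is automatic for polyhedral functions (finite sums of polyhedra in $\RR^d$ are closed) is a nice observation, but note it only yields Theorem~\ref{Theman:1}\ref{mainGed:1}$\Rightarrow$\ref{mainGed:2}, i.e.\ the conjugate identity without exactness; the exactness on the polyhedral block still requires the closed-epigraph (or classical) argument, as you correctly anticipate.
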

\begin{proof}
Set $g_1:=\sum_{i=1}^k f_i$ and $g_2:=\sum_{i=k+1}^m f_i$.
By \cite[Corollary~19.1.2]{Rock70CA}, $f_i$ is lower semicontinuous for every $i\in\{1,2,\ldots,k\}$, so is $g_1$. By Corollary~\ref{DualCorL:5}, $(g_1+g_2)^*=g_1^*\Box g_2^*$ with the exact  infimal convolution
and $\partial (g_1+g_2)=\partial g_1 +\partial g_2$.

Let $i\in\{1,2,\ldots,k\}$. By \cite[Theorem~19.2]{Rock70CA}, $f^*_i$
is a polyhedrally convex function.  Hence $f_1^*\Box\cdots \Box f_m^*$
is polyhedrally convex by
\cite[Corollary~19.3.4]{Rock70CA} and hence $\sum_{i=1}^m \epi f_i^*$
is closed by \cite[Theorem~2.1.3(ix)]{Zalinescu} and \cite[Theorem~19.1]{Rock70CA}.  Then applying
Corollary~\ref{DualCorL:1}, we have $g_1^*=f_1^*\Box\cdots \Box f_k^*$
with the infimal convolution is exact everywhere. Using now Lemma~\ref{Lemresou} we obtain $\partial
g_1=\partial (f_1+f_2+\cdots+f_k)=\partial f_1+\cdots+\partial f_k$.

By Corollary~\ref{DualCorL:RL4}, we have $g_2^*=f_{k+1}^*\Box\cdots \Box
f_m^*$ with exact infimal convolution, and $\partial g_2=\partial
(f_{k+1}+f_{k+2}+\cdots+f_m)=\partial f_{k+1}+\cdots+\partial f_m$.

Combining the above results, we have $(\sum_{i=1}^m
f_i)^*=(g_1+g_2)^*=f_1^*\Box\cdots \Box f_m^*$ with  exact infimal
convolution, and $\partial (f_1+f_2+\cdots+f_m)=\partial
f_1+\cdots+\partial f_m$.
\end{proof}

\section{Conclusion}
We have introduced a new dual condition for zero duality gap in convex
programming. We have proved that our condition is less restrictive
than all other conditions in the literature, and we have related it
with (a) Bertsekas constraint qualification, (b) the closed epigraph
condition, and (c) the interiority conditions. We have used our
closedness condition to simplify the well-known expression for the
subdifferential of the sum of convex functions. Our study has
motivated the following open questions.
\begin{enumerate}
  \item Does the Closed Epigraph Condition imply Bertsekas Constraint Qualification?
  \item Are the conditions of Theorem \ref{Theman:1} strictly more restrictive than Bertsekas Constraint Qualification?
  \item How do these results extend when, instead of the sum of convex functions, the objective of the primal problem has the form $f+g\circ A$, where $f,g$ convex and $A$ a linear operator?
\end{enumerate}

\paragraph{Acknowledgments.} The authors thank the anonymous referee
for his/her pertinent and constructive comments. The authors are
grateful to Dr.\ Ern\"o Robert Csetnek for pointing out to us some
important references.  Jonathan Borwein and Liangjin Yao were
partially supported by the Australian Research Council.  The third
author thanks the School of Mathematics and Statistics (currently the
School of Information Technology and Mathematical Sciences) of
University of South Australia, for its support towards a visit to Adelaide,
which started this research.


\begin{thebibliography}{99}
\bibitem{AtBrezis}
H.\ Attouch and H.\ Brezis,
``Duality for the sum of convex functions in general Banach spaces",
\emph{Aspects of Mathematics and its Applications}, J. A.
Barroso, ed., Elsevier Science Publishers, pp.~125--133, 1986.


\bibitem{BC2011}
H.H.\ Bauschke and P.L.\ Combettes,
\emph{Convex Analysis and Monotone Operator Theory in Hilbert Spaces},
Springer, 2011.


\bibitem{BerT}
D.P.\ Bertsekas, ``Extended monotropic programming and duality", Lab. for Information and
Decision Systems Report LIDS-2692, MIT;
\url{http://web.mit.edu/dimitrib/www/Extended_Mono.pdf},
February 2010.

\bibitem{Bor} J.M. Borwein, ``Adjoint process duality,'' \emph{Mathematics of Operations Research}, vol.~8, pp.~403--434, 1983.

\bibitem{BorVan}
J.M.\ Borwein and J.D.\ Vanderwerff,
\emph{Convex Functions},
Cambridge University Press, 2010.

\bibitem{BotCse2}R.I.\ Bo\c{t} and
E.R.\ Csetnek, ``On a zero duality gap result in extended monotropic programming",
\emph{Journal of Optimization Theory and Applications}, vol.~147, pp.~473--482, 2010.


\bibitem{BotGrad}R.I.\ Bo\c{t}  and S.-M.\ Grad,  ``Lower semicontinuous type regularity conditions for subdifferential calculus", \emph{Optimization Methods and Software}, vol.~25, pp~37--48, 2010.

\bibitem{BotGradWank}R.I.\ Bo\c{t}, S.-M.\ Grad, and G.\ Wanka,
\emph{Duality in Vector Optimization}, Springer, 2009.

\bibitem{BotWank1}R.I.\ Bo\c{t} and G.\ Wanka,
 ``A weaker regularity condition for subdifferential calculus and Fenchel duality in infinite dimensional spaces",
\emph{Nonlinear Anal.}, vol.~64 (2006), pp.~2787--2804, 2006.


\bibitem{BurIus}
R.S.\ Burachik and A.N.\ Iusem,
\emph{Set-Valued Mappings and Enlargements of Monotone Operators},
Springer, vol 8, 2008.


\bibitem{BurJeya}
R.S.\ Burachik and  V.\ Jeyakumar, ``A new geometric condition for Fenchel’s duality in infinite
dimensional spaces",
\emph{Math. Programming}, vol.~104, pp.~229--233, 2005.

\bibitem{BurJeyaWu}
R.S.\ Burachik, V.\ Jeyakumar, and Z.-Y.\ Wu,
``Necessary and sufficient conditions for stable
conjugate duality",
\emph{Nonlinear Analysis: Theory Methods Appl.}, vol.~64, pp.~1998--2006, 2006.

\bibitem{EkeTem}
I.\ Ekeland and R.\ T\'{e}mam, \emph{Convex analysis and variational problems}, Society for Industrial and Applied Mathematics (SIAM), Philadelphia,  1999.
\bibitem{BurMaj}
R.S.\ Burachik and S.N.\ Majeed,
``Strong duality for generalized monotropic programming
in infinite Dimensions", \emph{Journal of Mathematical Analysis and Applications},
vol.~400, pp.~541--557, 2013.





\bibitem{FS}
S. P.\ Fitzpatrick, and S.\ Simons,
``The Conjugates, Compositions and Marginals of
Convex Functions", \emph{Journal of Convex Analysis},
vol.~8(2), pp.~423--446, 2001.


\bibitem{BanLoZa}A.\ Hantoutey, M.A.\ L\'{o}pez, and C.\ Z\v{a}linescu,
``Subdiferential calculus rules in convex analysis: A unifying
approach via pointwise supremum functions", \emph{SIAM Journal on Optimization},
 vol.~19, pp.~863--882, 2008.

\bibitem{HUMSV}
J.-B.\ Hiriart-Urruty, M.\ Moussaoui, A.\ Seeger, and M.\ Volle,
``Subdifferential calculus without qualification conditions, using approximate subdifferentials: a survey",
\emph{Nonlinear Anal.}, vol.~24, pp.~1727--1754, 1995.


\bibitem{HUP}
J.-B. Hiriart-Urruty and R.R.\ Phelps,
``Subdifferential Calculus Using $\varepsilon$-Subdifferentials", \emph{Journal of Functional Analysis}
vol.~118, pp.~154--166, 1993.


\bibitem{Laurent}
P. J. Laurent,
\emph{Approximation et optimisation},
Hermann, Paris, 1972.


\bibitem{LiNg}
 G.\ Li and K.F.\ Ng, ``On extension of Fenchel duality and its application", \emph{SIAM Journal on Optimization},
 vol.~19, pp.~1489--1509, 2008.


\bibitem{ph}
R.R.\ Phelps,
\emph{Convex Functions, Monotone Operators and
Differentiability},
2nd Edition, Springer-Verlag, 1993.


\bibitem{Rock66}
R.T.\ Rockafellar,
``Extension of Fenchel's duality theorem for
convex functions'',
\emph{Duke Mathematical Journal}, vol.~33, pp.~81--89, 1966.


\bibitem{Rock70CA}
 R.T.\ Rockafellar, \emph{Convex Analysis},
  Princeton Univ. Press, Princeton, 1970.

\bibitem{Rock81} R.T.\ Rockafellar, ``Monotropic programming: descent algorithms and duality", \emph{Nonlinear Programming}, vol.~4, pp. 327--366, Academic Press,
San Diego, 19981

\bibitem{Rock84}
R.T.\ Rockafellar, \emph{Network Flows and Monotropic Optimization}, Wiley, New York, 1984.


\bibitem{RockWets}
R.T.\ Rockafellar and R.J-B Wets,
\emph{Variational Analysis}, 3rd Printing,
Springer-Verlag, 2009.

\bibitem{RodSim}
B.\ Rodrigues and S.\ Simons,
``Conjugate functions and subdifferentials in non-normed situations for operators with complete graphs",
\emph{Nonlinear Anal: Theory Methods Appl.}, vol.~12, pp.~1069--1078, 1998.

\bibitem{Rudin}
R.\ Rudin,
\emph{Functional Analysis},
Second Edition, McGraw-Hill, 1991.


\bibitem{Tseng}
P.\ Tseng
``Some convex programs without a duality gap'',
\emph{Mathematical Progamming}, Ser. B, pp.~553-–578, 2009.

\bibitem{Zalinescu1}
C.\ Z\u{a}linescu,
``A comparison of constraint qualifications in infinite-dimensional convex programming revisited",
\emph{Australian Mathematical Society. Journal. Series B. Applied Mathematics},
vol.~40, pp.~353--378, 1999.

\bibitem{Zalinescu}
C.\ Z\u{a}linescu,
\emph{Convex Analysis in General Vector Spaces}, World Scientific
Publishing, 2002.


\end{thebibliography}
\end{document}